\DeclareFontFamily{OT1}{pzc}{}
\DeclareFontShape{OT1}{pzc}{m}{it}{<-> s * [1.10] pzcmi7t}{}
\DeclareMathAlphabet{\mathpzc}{OT1}{pzc}{m}{it}
\crefname{defin}{Definition}{Definitions}
\crefname{eg}{Example}{Examples}
\crefname{lem}{Lemma}{Lemmas}
\crefname{theo}{Theorem}{Theorems}
\crefname{equation}{}{}
\crefname{enumi}{}{}
\newcommand\N{\mathbb{N}}
\newcommand\Z{\mathbb{Z}}
\newcommand\kk{\Bbbk}
\newcommand\one{\mathbbm{1}}
\newcommand\ba{\mathbf{a}}
\newcommand\bb{\mathbf{b}}
\newcommand\B{\mathbf{B}}
\newcommand\cC{\mathcal{C}}
\newcommand\fS{\mathfrak{S}}            
\newcommand\op{\mathrm{op}}
\newcommand\rev{\mathrm{rev}}
\newcommand\triv{\mathrm{triv}}
\newcommand\go{\mathsf{I}}              
\newcommand\even{{\bar{0}}}
\newcommand\odd{{\bar{1}}}
\newcommand{\trp}{\varepsilon}          
\newcommand\nilCox{\mathpzc{nilCox}}        
\newcommand\ONC{\mathpzc{ONC}}              
\newcommand\nilHecke{\mathpzc{nilHecke}}    
\newcommand\ONH{\mathpzc{ONH}}              
\newcommand\Pol{\mathpzc{Pol}}              
\newcommand\tower{\mathpzc{tower}}          
\newcommand\Cl{\mathrm{Cl}}                 
\newcommand\nilCoxalg{\mathrm{nilCox}}      
\newcommand\ONCalg{\mathrm{ONC}}            
\newcommand\nilHeckealg{\mathrm{nilHecke}}  
\newcommand\ONHalg{\mathrm{ONH}}            
\newcommand\Polalg{\mathrm{Pol}}            
\DeclareMathOperator{\End}{End}
\DeclareMathOperator{\Hom}{Hom}
\DeclareMathOperator{\id}{id}
\DeclareMathOperator{\tr}{tr}
\newcommand{\dotlabel}[1]{$\scriptstyle{#1}$}
\newcommand{\braidup}{to[out=up,in=down]}
\newcommand{\opendot}[1]{\filldraw[fill=white,draw=red] (#1) circle (2pt)}
\newcommand{\token}[3]{
    \filldraw[blue] (#1) circle (1.5pt) node[anchor=#2] {\dotlabel{#3}}
}
\newcommand{\cltoken}[1]{
    \filldraw[blue] (#1) circle (1.5pt)
}
\newcommand{\xtoken}[1]{
    \filldraw[green] (#1) circle (1.5pt)
}
\newcommand\teleport[2]{
    \draw[blue] (#1) -- (#2);
    \filldraw[blue] (#1) circle (1.5pt);
    \filldraw[blue] (#2) circle (1.5pt);
}
\newcommand\xteleport[2]{
    \draw[green] (#1) -- (#2);
    \filldraw[green] (#1) circle (1.5pt);
    \filldraw[green] (#2) circle (1.5pt);
}
\newcommand{\crossgen}{
    \begin{tikzpicture}[centerzero]
        \draw (0.2,-0.2) -- (-0.2,0.2);
        \draw (-0.2,-0.2) -- (0.2,0.2);
    \end{tikzpicture}
}
\newcommand{\dotstrand}{
    \begin{tikzpicture}[centerzero]
        \draw (0,-0.2) -- (0,0.2);
        \opendot{0,0};
    \end{tikzpicture}
}
\newcommand{\tokstrand}[1][a]{
    \begin{tikzpicture}[centerzero]
        \draw (0,-0.2) -- (0,0.2);
        \token{0,0}{west}{#1};
    \end{tikzpicture}
}
\newcommand{\cltokstrand}{
    \begin{tikzpicture}[centerzero]
        \draw (0,-0.2) -- (0,0.2);
        \cltoken{0,0};
    \end{tikzpicture}
}
\newcommand{\xtokstrand}[1][a]{
    \begin{tikzpicture}[centerzero]
        \draw (0,-0.2) -- (0,0.2);
        \xtoken{0,0};
    \end{tikzpicture}
}
\tikzset{anchorbase/.style={>=To,baseline={([yshift=-0.5ex]current bounding box.center)}}}
\tikzset{ 
    centerzero/.style={>=To,baseline={([yshift=-0.5ex](#1))}},
    centerzero/.default={0,0}
}
\tikzset{wipe/.style={white,line width=4pt}}
\newtheorem{theo}{Theorem}[section]
\newtheorem{prop}[theo]{Proposition}
\newtheorem{lem}[theo]{Lemma}
\newtheorem{cor}[theo]{Corollary}
\theoremstyle{definition}
\newtheorem{defin}[theo]{Definition}
\newtheorem{rem}[theo]{Remark}
\newtheorem{egs}[theo]{Examples}
\numberwithin{equation}{section}
  \newcommand{\acomments}[1]{
    \ \\
    {\color{red}
      \textbf{AS:} #1
    }
    \ \\
    }
  \newcommand{\question}[1]{
    \ \\
    {\color{blue}
      \textbf{Question:} #1
    }
    \ \\
    }
  \newcommand{\acomments}[1]{}
  \newcommand{\question}[1]{}
  \newcommand{\details}[1]{
      \ \\
      {\color{OliveGreen}
        \textbf{Details:} #1
      }
      \\
  }
  \newcommand{\details}[1]{}
\begin{document}

\title{Frobenius nilHecke algebras}

\author{Alistair Savage}
\address[A.S.]{
  Department of Mathematics and Statistics \\
  University of Ottawa \\
  Ottawa, ON K1N 6N5, Canada
}
\urladdr{\href{https://alistairsavage.ca}{alistairsavage.ca}, \textrm{\textit{ORCiD}:} \href{https://orcid.org/0000-0002-2859-0239}{orcid.org/0000-0002-2859-0239}}
\email{alistair.savage@uottawa.ca}

\author{John Stuart}
\address[J.S.]{
  Department of Mathematics and Statistics \\
  University of Ottawa \\
  Ottawa, ON K1N 6N5, Canada
}
\email{jstua022@uottawa.ca}

\begin{abstract}
    To any Frobenius superalgebra $A$ we associate towers of \emph{Frobenius nilCoxeter algebras} and \emph{Frobenius nilHecke algebras}.  These act naturally, via \emph{Frobenius divided difference operators}, on \emph{Frobenius polynomial algebras}.  When $A$ is the ground ring, our algebras recover the classical nilCoxeter and nilHecke algebras.  When $A$ is the two-dimensional Clifford algebra, they are Morita equivalent to the odd nilCoxeter and odd nilHecke algebras.
\end{abstract}

\subjclass[2020]{20C08}

\keywords{nilCoxeter algebra, nilHecke algebra, Frobenius algebra, divided difference operator, Demazure operator}

\ifboolexpr{togl{comments} or togl{details}}{%
  {\color{magenta}DETAILS OR COMMENTS ON}
}{%
}

\maketitle
\thispagestyle{empty}

\tableofcontents

\section{Introduction}

NilHecke algebras play an important role in representation theory, geometry, and categorification.  In particular, they appear as algebras of divided difference operators on polynomial rings, as algebras of push-pull operators on the cohomology of the flag variety, and as the simplest case of the quiver Hecke algebras categorifying quantized enveloping algebras.

Loosely speaking, nilHecke algebras are ``nil'' versions of affine Hecke algebras.  Recently, affine Hecke algebras and their degenerate analogues have been generalized in \cite{RS19,Sav20} by incorporating a Frobenius superalgebra $A$ in such a way that when $A$ is the ground ring $\kk$, one recovers the classical constructions.  This general approach allows one to simultaneously handle many variants of (degenerate) affine Hecke algebras, since they occur as special cases of the general construction.  Examples include wreath Hecke algebras, affine Sergeev algebras, affine Yokonuma--Hecke algebras, and affine zigzag algebras.

The goal of the current paper is to continue the program of ``Frobenization'' by defining an analogous Frobenius superalgebra generalization of nilHecke algebras.  Precisely, to every Frobenius superalgebra $A$ and positive integer $n$, we associate a \emph{Frobenius nilCoxeter algebra} $\nilCoxalg_n(A)$ and a \emph{Frobenius nilHecke algebra} $\nilHeckealg_n(A)$.  When $A$ is the ground ring $\kk$, these recover the classical nilCoxeter and nilHecke algebras.  When $A$ is the rank two Clifford algebra, they are Morita equivalent to the odd nilCoxeter and odd nilHecke algebras studied in \cite{EKL14}.  The latter also appear as special cases of the quiver Hecke superalgebras introduced in \cite{KKT16}, and they are nil versions of the degenerate spin affine Hecke algebras defined in \cite[\S3.3]{Wan09}.  When $A$ is the group algebra of a finite cyclic group, $\nilHeckealg_n(A)$ is the nil Yokonuma--Hecke algebra.  For other choices of $A$, including the important cases of zigzag algebras and group algebras of arbitrary finite groups, the Frobenius nilCoxeter algebra and the Frobenius nilHecke algebra do not seem to have appeared before in the literature.

Just as the classical nilHecke algebra acts on the polynomial ring $\kk[x_1,\dotsc,x_n]$ via divided difference operators (also called Demazure operators), the more general Frobenius nilHecke algebra acts on the \emph{Frobenius polynomial algebra} $\Polalg_n(A)$ via \emph{Frobenius divided difference operators}.  In fact, we will see (\cref{BT}) that we have an isomorphism of $\kk$-modules
\[
    \nilHeckealg_n(A) \cong \Polalg_n(A) \otimes \nilCoxalg_n(\kk),
\]
with the two factors on the right being subalgebras.  (Recall that $\nilCoxalg_n(\kk)$ is the usual nilCoxeter algebra.)

Since the Frobenius nilCoxeter and Frobenius nilHecke algebras form \emph{towers of superalgebras}, we use the theory of strict monoidal supercategories to give an efficient presentation of the entire tower all at once.  In particular, we define strict monoidal $\kk$-linear supercategories $\nilCox(A)$ and $\nilHecke(A)$ whose endomorphism algebras are the superalgebras $\nilCoxalg_n(A)$ and $\nilHeckealg_n(A)$.  We then deduce presentations of these as superalgebras.

Throughout the paper we make the assumption that the Frobenius superalgebra $A$ is \emph{symmetric} since this greatly simplifies the exposition and holds in all of our examples of interest.  Here it is important that we allow for \emph{odd} trace maps, since the Clifford superalgebra is only a \emph{symmetric} Frobenius superalgebra when it is equipped with its odd trace map.  In \cref{sec:nonsym} we outline the changes that need to be made in the definitions in order to handle the more general case where $A$ is not necessarily symmetric.

We expect that further study of the algebras introduced in the current paper should lead to natural generalizations of existing results.  In particular, it would be interesting to study cyclotomic quotients, connections to geometry of flag varieties, and relations to categorification of quantized enveloping algebras.

\section{Monoidal supercategories and towers of algebras}

Throughout the paper, we fix a commutative ground ring $\kk$.  All tensor products are over $\kk$ unless otherwise specified.  All superalgebras are associative superalgebras over $\kk$ and all (super)categories are $\kk$-linear.   For a homogeneous element $a$ in a vector superspace, we let $\bar{a} \in \Z_2$ denote its parity.  When we write an equation involving parities of elements, we implicitly assume these elements are homogeneous; we then extend by linearity.

For superalgebras $A = A_\even \oplus A_\odd$ and $B = B_\even \oplus B_\odd$, multiplication in the superalgebra $A \otimes B$ is defined by
\begin{equation}
    (a' \otimes b) (a \otimes b') = (-1)^{\bar a \bar b} a'a \otimes bb'
\end{equation}
for homogeneous $a,a' \in A$, $b,b' \in B$.

Throughout this paper we will work with \emph{strict monoidal supercategories}, in the sense of \cite{BE17}.  We refer the reader to \cite[\S 2]{BSW-foundations} for a summary of this topic well adapted to the current work, or to \cite{BE17} for a thorough treatment.  We summarize here a few crucial properties that play an important role in the present paper.

A \emph{supercategory} means a category enriched in the category of vector superspaces with parity-preserving morphisms.  Thus, its morphism spaces are vector superspaces and composition is parity-preserving.  In a \emph{strict monoidal supercategory}, morphisms satisfy the \emph{super interchange law}:
\begin{equation}\label{interchange}
    (f' \otimes g) \circ (f \otimes g')
    = (-1)^{\bar f \bar g} (f' \circ f) \otimes (g \circ g').
\end{equation}
We denote the unit object by $\one$ and the identity morphism of an object $X$ by $1_X$.  We will use the usual calculus of string diagrams, representing the horizontal composition $f \otimes g$ (resp.\ vertical composition $f \circ g$) of morphisms $f$ and $g$ diagrammatically by drawing $f$ to the left of $g$ (resp.\ drawing $f$ above $g$).  Care is needed with horizontal levels in such diagrams due to the signs arising from the super interchange law:
\begin{equation}\label{intlaw}
    \begin{tikzpicture}[anchorbase]
        \draw (-0.5,-0.5) -- (-0.5,0.5);
        \draw (0.5,-0.5) -- (0.5,0.5);
        \filldraw[fill=white,draw=black] (-0.5,0.15) circle (5pt);
        \filldraw[fill=white,draw=black] (0.5,-0.15) circle (5pt);
        \node at (-0.5,0.15) {$\scriptstyle{f}$};
        \node at (0.5,-0.15) {$\scriptstyle{g}$};
    \end{tikzpicture}
    \quad=\quad
    \begin{tikzpicture}[anchorbase]
        \draw (-0.5,-0.5) -- (-0.5,0.5);
        \draw (0.5,-0.5) -- (0.5,0.5);
        \filldraw[fill=white,draw=black] (-0.5,0) circle (5pt);
        \filldraw[fill=white,draw=black] (0.5,0) circle (5pt);
        \node at (-0.5,0) {$\scriptstyle{f}$};
        \node at (0.5,0) {$\scriptstyle{g}$};
    \end{tikzpicture}
    \quad=\quad
    (-1)^{\bar f\bar g}\
    \begin{tikzpicture}[anchorbase]
        \draw (-0.5,-0.5) -- (-0.5,0.5);
        \draw (0.5,-0.5) -- (0.5,0.5);
        \filldraw[fill=white,draw=black] (-0.5,-0.15) circle (5pt);
        \filldraw[fill=white,draw=black] (0.5,0.15) circle (5pt);
        \node at (-0.5,-0.15) {$\scriptstyle{f}$};
        \node at (0.5,0.15) {$\scriptstyle{g}$};
    \end{tikzpicture}
    \ .
\end{equation}

In fact, all of the categories we consider in the current paper will be strict monoidal supercategories generated by a single object $\go$.  The domain and codomain of a string diagram can then be read from the number of strands at the bottom and top, respectively, of the diagram.  For example,
\[
    \crossgen \colon \go^{\otimes 2} \to \go^{\otimes 2}
    \qquad \text{and} \qquad
    \dotstrand \colon \go \to \go.
\]
(In fact, all of our generating morphisms will be endomorphisms, having the same domain and codomain.)  For this reason, we will often omit the domain and codomain when introducing the generating morphisms of a category.  When numbering strands, we will always number \emph{from right to left}.

A strict monoidal supercategory $\cC$ with one generating object $\go$ gives rise to a \emph{tower of algebras}
\[
    \End_\cC(\go^{\otimes n}),\ n \in \N.
\]
We use this idea to introduce various families of algebras in an extremely efficient way, giving a presentation of $\cC$ with a small number of generating morphisms and relations.  If we then wish to have a presentation of the endomorphism algebras of $\cC$ \emph{as superalgebras}, we use the following result.

\begin{prop} \label{zebra}
    Suppose $\cC$ is a strict monoidal supercategory with one generating object $\go$ and generating morphisms $f_i \in \End_\cC(\go^{\otimes n_i})$, $i \in I$, subject to the relations $R_j \in \End_\cC(\go^{\otimes m_j})$, $j \in J$.  Then $\End_\cC(\go^{\otimes n})$ is generated as an algebra by the elements
    \[
        1_\one^{n-n_i-k} \otimes f_i \otimes 1_\one^k,\quad i \in I,\ 0 \le k \le n-n_i,
    \]
    subject to the relations
    \begin{equation} \label{zebra1}
        1_\one^{n-m_j-k} \otimes R_j \otimes 1_\one^k,\quad j \in J,\ 0 \le k \le n-m_j,
    \end{equation}
    and the relations
    \begin{equation} \label{zebra2}
        \left( 1_\one^{\otimes k_1} \otimes f_i \otimes 1^{\otimes k_2} \right)
        \left( 1_\one^{\otimes l_1} \otimes f_j \otimes 1^{\otimes l_2} \right)
        - (-1)^{\overline{f_i} \overline{f_j}}
        \left( 1_\one^{\otimes l_1} \otimes f_j \otimes 1^{\otimes l_2} \right)
        \left( 1_\one^{\otimes k_1} \otimes f_i \otimes 1^{\otimes k_2} \right)
    \end{equation}
    for $i,j \in I$, $k_1+n_i+k_2 = n = l_1 + n_j + l_2$, $k_2 \ge n_j + l_2$.
\end{prop}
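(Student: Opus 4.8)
The plan is to identify $\End_\cC(\go^{\otimes n})$ with the algebra $P$ presented abstractly by the generators and relations in the statement. Write $\tilde{f}_{i,k} := 1_\one^{n-n_i-k} \otimes f_i \otimes 1_\one^{k}$ for the proposed generators, let $P$ be the unital associative $\kk$-algebra on symbols $\tilde{f}_{i,k}$ modulo the relations \eqref{zebra1} and \eqref{zebra2}, and let $\Phi \colon P \to \End_\cC(\go^{\otimes n})$ be the homomorphism (for composition $\circ$) sending each symbol to the corresponding whiskered morphism. First I would check that $\Phi$ is well defined. The relations \eqref{zebra1} hold because each $R_j$ is zero in $\cC$, hence so is any whiskering of it. The relations \eqref{zebra2} hold because, under the hypothesis $k_2 \ge n_j + l_2$, the morphisms $f_i$ and $f_j$ occupy disjoint sets of strands; splitting the $n$ strands into the left block carrying $f_i$ and the right block carrying $f_j$ and applying the super interchange law \eqref{interchange} shows directly that the two whiskered morphisms super-commute with Koszul sign $(-1)^{\overline{f_i}\,\overline{f_j}}$.

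Next I would establish surjectivity. By construction of $\cC$ as a strict monoidal supercategory on the given data, every element of $\End_\cC(\go^{\otimes n})$ is a $\kk$-linear combination of vertical composites of horizontal ``layers'', each layer being a tensor product of the $f_i$ and identity morphisms of total width $n$. Using \eqref{intlaw} I would perturb the generators within a single layer to distinct heights: since a single identity carries trivial parity, the identity $a \otimes b = (a \otimes 1)\circ(1 \otimes b)$ holds without sign, so any layer containing several generators rewrites as a vertical composite of layers each carrying a single $f_i$ whiskered by identities, i.e.\ as a product of the $\tilde{f}_{i,k}$. Hence $\Phi$ is surjective.

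The substance is injectivity, and this is the step I expect to be the main obstacle. I would first treat the free case. Let $\mathcal{F}$ be the free strict monoidal supercategory on $\go$ and the $f_i$ (with no relations $R_j$ imposed), and let $P_0$ be the algebra on the $\tilde{f}_{i,k}$ modulo \eqref{zebra2} only; the previous two paragraphs give a surjection $\Phi_0 \colon P_0 \to \End_{\mathcal{F}}(\go^{\otimes n})$. To invert it I would invoke the diagrammatic description of morphisms in a free strict monoidal supercategory from \cite{BE17} (see also \cite{BSW-foundations}): morphisms are string diagrams up to isotopy, the super interchange law \eqref{intlaw} being the only sign-carrying move. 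Placing a diagram in generic position, so that all generator boxes sit at distinct heights, and reading from top to bottom produces a word in the $\tilde{f}_{i,k}$; the only ambiguities in this reading are vertical isotopies (which do not change the word) and the exchange of heights of two horizontally disjoint boxes (which is precisely \eqref{zebra2}). This yields a well-defined $\kk$-linear map $\Psi_0 \colon \End_{\mathcal{F}}(\go^{\otimes n}) \to P_0$ with $\Psi_0 \circ \Phi_0 = \id_{P_0}$, so $\Phi_0$ is injective and hence an isomorphism. The delicate point here is the diagrammatic coherence statement, namely that generic-position reading is well defined modulo exactly isotopy and interchange; this is what I would lean on \cite{BE17} for, rather than reprove.

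Finally I would pass to the quotient. The category $\cC$ is $\mathcal{F}$ modulo the tensor ideal generated by the $R_j$, so $\End_\cC(\go^{\otimes n})$ is $\End_{\mathcal{F}}(\go^{\otimes n})$ modulo the $\kk$-span of all width-$n$ diagrams containing a whiskered copy of some $R_j$. Under $\Phi_0$ this span corresponds exactly to the two-sided ideal of $P_0$ generated by the elements $1_\one^{n-m_j-k} \otimes R_j \otimes 1_\one^{k}$, since after generic positioning any such diagram becomes a product $w_1 \cdot \bigl(1_\one^{n-m_j-k} \otimes R_j \otimes 1_\one^{k}\bigr) \cdot w_2$ in $P_0$, and conversely every such product arises this way. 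Quotienting both sides by these matching ideals gives $\End_\cC(\go^{\otimes n}) \cong P$, as claimed. Throughout, the one recurring subtlety is the careful bookkeeping of the Koszul signs coming from \eqref{intlaw}, which is exactly why \eqref{zebra2} is stated with the factor $(-1)^{\overline{f_i}\,\overline{f_j}}$ and the one-sided overlap condition $k_2 \ge n_j + l_2$.
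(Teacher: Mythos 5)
Your proposal is correct, but it takes a genuinely different route from the paper: the paper's entire proof is an appeal to Theorems~5.2 and~5.4 of \cite{Liu18} (together with the observation that the relations \cref{zebra2} encode the super interchange law), whereas you give a direct argument. Your strategy --- present the candidate algebra $P$, check well-definedness and surjectivity of $\Phi$, prove injectivity first for the free monoidal supercategory $\mathcal{F}$ by reading generic-position diagrams as words modulo exactly the height-exchange relations \cref{zebra2}, and then match the tensor ideal generated by the $R_j$ with the two-sided ideal generated by the elements \cref{zebra1} --- is essentially a proof of Liu's result in the one-object case, and all the individual steps check out: the sign analysis of \cref{interchange} for horizontally disjoint boxes under the condition $k_2 \ge n_j + l_2$ is exactly what \cref{zebra2} records, and the sign-free factorization $a \otimes b = (a \otimes 1)\circ(1 \otimes b)$ used for surjectivity is valid because identities are even. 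What your approach buys is transparency: one sees precisely where the two families of relations come from. What it costs is that the genuinely hard content --- the coherence statement that morphisms of the free strict monoidal supercategory are string diagrams modulo isotopy and super interchange only --- is still outsourced to \cite{BE17}, so the argument is not really more self-contained than the paper's citation; it just relocates the black box. If you wanted a fully self-contained proof you would need to prove that coherence statement, e.g.\ by an explicit normal-form or rewriting argument on words in the $\tilde f_{i,k}$, which is exactly what \cite{Liu18} does in general.
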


\begin{proof}
    This follows from Theorems~5.2 and~5.4 of \cite{Liu18}.  Note that the relations \cref{zebra2} correspond to the super interchange law.
\end{proof}

Note that the assumptions in \cref{zebra} are quite strong: all generating morphisms and relations are endomorphisms.  All of our categories will have this property.

\section{Frobenius superalgebras}

We fix a symmetric Frobenius superalgebra $A$ with trace map $\tr \colon A \to \kk$ of parity $\trp$.  By definition, this means that $\tr$ is a homogeneous $\kk$-linear map of parity $\trp$ satisfying
\begin{equation} \label{jackal}
    \tr(ab) = (-1)^{\bar{a} \bar{b}} \tr(b a),\quad a,b \in A,
\end{equation}
and $A$ has a basis $\B_A$ with a dual basis $\B_A^\vee = \{b^\vee : b \in \B_A\}$ satisfying
\begin{equation} \label{lake}
    \tr(a^\vee b) = \delta_{a,b},\quad a,b \in \B_A.
\end{equation}
It follows that
\begin{equation} \label{beam}
    \sum_{b \in \B_A} \tr(b^\vee a) b
    = a
    = \sum_{b \in \B_A} \tr(ab) b^\vee,\quad a \in A.
\end{equation}
Note that $\bar{b} + \overline{b^\vee} = \trp$.  By abuse of notation, we will often refer to $A$ itself as a Frobenius superalgebra, leaving the trace map $\tr$ implicit.

\begin{egs}
    It may be useful for the reader to keep in mind the following important examples of Frobenius superalgebras:
    \begin{enumerate}
        \item $A = \kk$ with $\tr = \id$;
        \item $A = \Cl$, the rank two Clifford superalgebra, with odd trace map (see \cref{sec:Clifford});
        \item $A = \kk G$, the group algebra of a finite group $G$, with trace map given by projection onto the identity element of $G$;
        \item $A$ a zigzag algebra in the sense of \cite{HK01}.
    \end{enumerate}
\end{egs}

 The symmetric group $\fS_n$ acts on $A^{\otimes n}$ by superpermutations.  In particular, the simple transposition $s_i$ acts by
\begin{equation} \label{swap}
    s_i ( a_n \otimes \dotsb \otimes a_1 )
    = (-1)^{\bar{a}_i \bar{a}_{i+1}} a_n \otimes \dotsb \otimes a_{i+2} \otimes a_i \otimes a_{i+1} \otimes a_{i-1} \otimes \dotsb a_1,\quad a_1,\dotsc,a_n \in A,
\end{equation}
extended by linearity.  Note that here, and throughout the paper, we number factors from \emph{right to left}.

Define
\begin{equation} \label{tau}
    \tau := \sum_{b \in \B_A} (-1)^{\trp \bar{b}} b \otimes b^\vee
    \in A^{\otimes 2}.
\end{equation}
The element $\tau$ has parity $\trp$ and is independent of the chosen basis $\B_A$.

\begin{lem}
    We have
    \begin{gather} \label{doubledual}
        (b^\vee)^\vee = (-1)^{\bar{b} + \trp \bar{b}} b,\quad b \in \B_A,
        \\ \label{train}
        \ba \tau = (-1)^{\trp \bar{a}} \tau s_1(\ba),\quad
        \ba \in A^{\otimes 2}.
    \end{gather}
\end{lem}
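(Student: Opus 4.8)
I would treat the two parts of the lemma separately, beginning with \eqref{doubledual}. The plan is to recognize $(b^\vee)^\vee$ as the dual-basis vector of $b^\vee$ relative to the basis $\B_A^\vee$, so that by \eqref{lake} it is the unique element characterized by $\tr\bigl((c^\vee)^\vee\, b^\vee\bigr) = \delta_{c,b}$ for all $b$. Thus it suffices to verify that the proposed element $(-1)^{\bar c + \trp\bar c} c$ satisfies this identity and then invoke uniqueness of dual bases. Concretely, \eqref{jackal} gives $\tr(c\, b^\vee) = (-1)^{\bar c\,\overline{b^\vee}}\tr(b^\vee c) = (-1)^{\bar c\,\overline{b^\vee}}\delta_{c,b}$ by \eqref{lake}, and on the diagonal term the parity relation $\overline{b^\vee} = \trp + \bar b$ turns the sign into $(-1)^{\bar b + \trp\bar b}$, which cancels the candidate's coefficient and leaves $\delta_{c,b}$, as needed.

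For \eqref{train}, I would first reduce to a short list of elements. By $\kk$-linearity it is enough to treat homogeneous $\ba$, and since $A^{\otimes 2}$ is generated as an algebra by the elements $a\otimes 1$ and $1\otimes a$, it suffices to check \eqref{train} on these generators, provided that the set of $\ba$ satisfying \eqref{train} is closed under multiplication. That closure is where the superpermutation structure enters: $s_1$ is an algebra automorphism of $A^{\otimes 2}$ (the action \eqref{swap} is the symmetric braiding of the super tensor product, hence multiplicative). Assuming $\ba\tau = (-1)^{\trp\bar a}\tau s_1(\ba)$ and the analogue for $\bb$, one then computes $(\ba\bb)\tau = (-1)^{\trp\bar b}(\ba\tau)s_1(\bb) = (-1)^{\trp(\bar a+\bar b)}\tau s_1(\ba)s_1(\bb) = (-1)^{\trp\overline{ab}}\tau s_1(\ba\bb)$, giving the claim for the product.

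It then remains to establish the two ``sliding'' identities $(a\otimes 1)\tau = (-1)^{\trp\bar a}\tau(1\otimes a)$ and $(1\otimes a)\tau = (-1)^{\trp\bar a}\tau(a\otimes 1)$; since $s_1(a\otimes 1) = 1\otimes a$ and $s_1(1\otimes a) = a\otimes 1$ by \eqref{swap}, these are exactly \eqref{train} restricted to the two families of generators. I would prove each by substituting the definition \eqref{tau} of $\tau$, multiplying out via the super-multiplication in $A^{\otimes 2}$, expanding the affected tensor slot by the completeness relation \eqref{beam}, repositioning the inserted factor using the cyclicity \eqref{jackal}, and finally collapsing the resulting sum with a second application of \eqref{beam}.

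The step I expect to be the main obstacle is the sign bookkeeping that manufactures the prefactor $(-1)^{\trp\bar a}$. The mechanism is that, because $\tr$ has parity $\trp$, only basis vectors $b$ of one fixed parity contribute to each sum, and on exactly those surviving terms the weight $(-1)^{\trp\bar b}$ built into $\tau$ can be rewritten, after re-indexing the dual basis, as $(-1)^{\trp\bar a}$ times the weight attached to the new summation index. Keeping straight the interplay of $\trp$, the parity constraint $\bar b + \overline{b^\vee} = \trp$, and the Koszul signs coming from \eqref{swap} and from the multiplication in $A^{\otimes 2}$ is essentially the entire content of the computation; everything else is formal.
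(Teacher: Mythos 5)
Your proposal is correct and follows essentially the same route as the paper: \cref{doubledual} is read off from the defining property \cref{lake} of dual bases (with the sign coming from \cref{jackal} and the parity relation $\overline{b^\vee}=\trp+\bar b$), and \cref{train} is reduced to the generators $a\otimes 1$ and $1\otimes a$ and then verified by expanding $\tau$, applying \cref{beam} and \cref{jackal}, and using the parity constraint on nonvanishing traces to convert $(-1)^{\trp\bar b}$ into $(-1)^{\trp\bar a}$ times the new weight. Your explicit justification of the reduction step via multiplicativity of $s_1$ is a detail the paper leaves implicit, but the argument is the same.
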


\begin{proof}
    The equation \cref{doubledual} follows immediately from \cref{lake}.  To prove \cref{train}, it suffices to consider $\ba$ of the form $a \otimes 1$ and $1 \otimes a$, $a \in A$.  We compute
    \[
        (a \otimes 1) \tau
        \overset{\cref{beam}}{=} \sum_{b,c \in \B_A} (-1)^{\trp \bar{b}} \tr(c^\vee ab) c \otimes b^\vee
        = \sum_{b,c \in \B_A} (-1)^{\trp(\bar{a} + \bar{c})} c \otimes \tr(c^\vee ab) b^\vee
        \overset{\cref{beam}}{=} (-1)^{\trp \bar{a}} \tau (1 \otimes a),
    \]
    where, in the second equality, we used the fact that $\tr(c^\vee a b)=0$ unless $\bar{c} + \bar{a} + \bar{b} = \even$.  Similarly,
    \[
        (1 \otimes a) \tau
        \overset{\cref{beam}}{=} \sum_{b,c \in \B_A} (-1)^{\trp \bar{b} + \bar{a} \bar{b}} b \otimes \tr(a b^\vee c) c^\vee
        \overset{\cref{jackal}}{=} \sum_{b,c \in \B_A} (-1)^{\trp \bar{c} + \bar{a} \bar{c}} \tr(b^\vee c a) b \otimes c^\vee
        \overset{\cref{beam}}{=} (-1)^{\trp \bar{a}} \tau (a \otimes 1).
    \]
\end{proof}

\begin{defin}
    The \emph{Frobenius tower category} $\tower(A)$ is the strict monoidal supercategory with one generating object $\go$, generating morphisms (called \emph{tokens})
    \[
        \tokstrand,\ a \in A,
    \]
    and relations
    \begin{equation} \label{tokrel}
        \begin{tikzpicture}[anchorbase]
            \draw (0,0) -- (0,0.7);
            \token{0,0.35}{west}{1};
        \end{tikzpicture}
        =
        \begin{tikzpicture}[anchorbase]
            \draw (0,0) -- (0,0.7);
        \end{tikzpicture}
        \ ,\quad
        \lambda\
        \begin{tikzpicture}[anchorbase]
            \draw (0,0) -- (0,0.7);
            \token{0,0.35}{west}{a};
        \end{tikzpicture}
        + \mu\
        \begin{tikzpicture}[anchorbase]
            \draw (0,0) -- (0,0.7);
            \token{0,0.35}{west}{b};
        \end{tikzpicture}
        =
        \begin{tikzpicture}[anchorbase]
            \draw (0,0) -- (0,0.7);
            \token{0,0.35}{west}{\lambda a + \mu b};
        \end{tikzpicture}
        ,\quad
        \begin{tikzpicture}[anchorbase]
            \draw (0,0) -- (0,0.7);
            \token{0,0.2}{east}{b};
            \token{0,0.45}{east}{a};
        \end{tikzpicture}
        =
        \begin{tikzpicture}[anchorbase]
            \draw (0,0) -- (0,0.7);
            \token{0,0.35}{west}{ab};
        \end{tikzpicture}
        \ ,\quad a,b \in A,\ \lambda,\mu \in \kk.
    \end{equation}
    We declare the parity of $\tokstrand$ to be the same as that of $a$.
\end{defin}

Note that relations \cref{tokrel} are precisely the relations we need in order to have a homomorphism of superalgebras
\begin{equation}
    A \to \End_{\tower(A)}(\go),\quad
    a \mapsto \tokstrand,\ a \in A.
\end{equation}
In fact, $\tower(A)$ is the free monoidal supercategory generated by an object with endomorphism superalgebra $A$.  It follows from \cref{zebra} that we have an isomorphism
\begin{equation} \label{minotaur}
    A^{\otimes n} \xrightarrow{\cong} \End_{\tower(A)}(\go^{\otimes n}),
\end{equation}
sending $1^{\otimes (n-i)} \otimes a \otimes 1^{\otimes (i-1)}$, $a \in A$, to a token labeled $a$ on the $i$-th strand.  As always, we label strands from \emph{right to left}.

Following \cite[\S4]{BSW-foundations}, we introduce the \emph{teleporters}
\begin{equation}\label{brexit}
    \begin{tikzpicture}[anchorbase]
        \draw[->] (0,-0.4) --(0,0.4);
        \draw[->] (0.5,-0.4) -- (0.5,0.4);
        \teleport{0,0}{0.5,0};
    \end{tikzpicture}
    := \sum_{b \in \B_A} (-1)^{\trp \bar{b}}
    \begin{tikzpicture}[anchorbase]
        \draw[->] (0,-0.4) --(0,0.4);
        \draw[->] (0.5,-0.4) -- (0.5,0.4);
        \token{0,0}{east}{b};
        \token{0.5,0}{west}{b^\vee};
    \end{tikzpicture}
    = \sum_{b \in \B_A} (-1)^{\trp \bar{b}}
    \begin{tikzpicture}[anchorbase]
        \draw[->] (0,-0.4) --(0,0.4);
        \draw[->] (0.5,-0.4) -- (0.5,0.4);
        \token{0,0.15}{east}{b};
        \token{0.5,-0.15}{west}{b^\vee};
    \end{tikzpicture}
    = (-1)^\trp \sum_{b \in \B_A} (-1)^{\trp \bar{b}}
    \begin{tikzpicture}[anchorbase]
        \draw[->] (0,-0.4) --(0,0.4);
        \draw[->] (0.5,-0.4) -- (0.5,0.4);
        \token{0,-0.15}{east}{b^\vee};
        \token{0.5,0.15}{west}{b};
    \end{tikzpicture}
    \ ,
\end{equation}
where, in the last equality, we used \cref{doubledual} and changed basis in the sum.  The teleporter is independent of the chosen basis $\B_A$ and has parity $\trp$.  It is the element of $\tower(A)$ corresponding to $\tau$ (see \cref{tau}) under the isomorphism of \cref{minotaur}.  It follows from \cref{train} that tokens can ``teleport'' across teleporters (justifying the terminology) in the sense that, for $a \in A$, we have
\begin{equation} \label{teleport}
    \begin{tikzpicture}[anchorbase]
        \draw[->] (0,-0.5) --(0,0.5);
        \draw[->] (0.5,-0.5) -- (0.5,0.5);
        \token{0,0.25}{east}{a};
        \teleport{0,0}{0.5,0};
    \end{tikzpicture}
    = (-1)^{\trp \bar{a}}\
    \begin{tikzpicture}[anchorbase]
        \draw[->] (0,-0.5) --(0,0.5);
        \draw[->] (0.5,-0.5) -- (0.5,0.5);
        \token{0.5,-0.25}{west}{a};
        \teleport{0,0}{0.5,0};
    \end{tikzpicture}
    \ ,\qquad
    \begin{tikzpicture}[anchorbase]
        \draw[->] (0,-0.5) --(0,0.5);
        \draw[->] (0.5,-0.5) -- (0.5,0.5);
        \token{0,-0.25}{east}{a};
        \teleport{0,0}{0.5,0};
    \end{tikzpicture}
    = (-1)^{\trp \bar{a}}\
    \begin{tikzpicture}[anchorbase]
        \draw[->] (0,-0.5) --(0,0.5);
        \draw[->] (0.5,-0.5) -- (0.5,0.5);
        \token{0.5,0.25}{west}{a};
        \teleport{0,0}{0.5,0};
    \end{tikzpicture}
    \ .
\end{equation}

\section{Frobenius nilCoxeter algebras}

In this section we introduce the tower of Frobenius nilCoxeter algebras.

\begin{defin}
    The \emph{Frobenius nilCoxeter category} $\nilCox(A)$ is the strict monoidal supercategory with one generating object $\go$ and generating morphisms
    \[
        \crossgen,\quad \tokstrand,\ a \in A,
    \]
    subject to the relations \cref{tokrel} and
    \begin{equation} \label{NCrel}
        \begin{tikzpicture}[anchorbase]
            \draw (0.2,-0.5) \braidup (-0.2,0) \braidup (0.2,0.5);
            \draw (-0.2,-0.5) \braidup (0.2,0) \braidup (-0.2,0.5);
        \end{tikzpicture}
        \ =\
        0
        \ ,\quad
        \begin{tikzpicture}[anchorbase]
            \draw (0.4,-0.5) -- (-0.4,0.5);
            \draw (0,-0.5) \braidup (-0.4,0) \braidup (0,0.5);
            \draw (-0.4,-0.5) -- (0.4,0.5);
        \end{tikzpicture}
        \ =\
        \begin{tikzpicture}[anchorbase]
            \draw (0.4,-0.5) -- (-0.4,0.5);
            \draw (0,-0.5) \braidup (0.4,0) \braidup (0,0.5);
            \draw (-0.4,-0.5) -- (0.4,0.5);
        \end{tikzpicture}
        \ ,\quad
        \begin{tikzpicture}[centerzero]
            \draw (0.3,-0.4) -- (-0.3,0.4);
            \draw (-0.3,-0.4) -- (0.3,0.4);
            \token{-0.15,-0.2}{east}{a};
        \end{tikzpicture}
        \ =\
        \begin{tikzpicture}[centerzero]
            \draw (0.3,-0.4) -- (-0.3,0.4);
            \draw (-0.3,-0.4) -- (0.3,0.4);
            \token{0.15,0.2}{west}{a};
        \end{tikzpicture}
        \ ,\quad
        \begin{tikzpicture}[centerzero]
            \draw (-0.3,-0.4) -- (0.3,0.4);
            \draw (0.3,-0.4) -- (-0.3,0.4);
            \token{0.15,-0.2}{west}{a};
        \end{tikzpicture}
        \ =\
        \begin{tikzpicture}[centerzero]
            \draw (-0.3,-0.4) -- (0.3,0.4);
            \draw (0.3,-0.4) -- (-0.3,0.4);
            \token{-0.15,0.2}{east}{a};
        \end{tikzpicture}
        \ .
    \end{equation}
    We refer to the generator $\crossgen$ as a \emph{crossing} and declare it to be even.  The parity of the token $\tokstrand$ is the same as the parity of $a$.  For $n \in \N$, we define the \emph{Frobenius nilCoxeter algebra}
    \[
        \nilCoxalg_n(A) := \End_{\nilCox(A)}(\go^{\otimes n}).
    \]
\end{defin}

\begin{prop} \label{unicorn}
    As a superalgebra, $\nilCoxalg_n(A)$ is isomorphic to the free product of $A^{\otimes n}$ and the free associative superalgebra on even generators $u_1,\dotsc,u_{n-1}$, subject to the relations
    \begin{align}
        u_i^2 &= 0, & 1 \le i \le n-1, \label{unicorn1} \\
        u_i u_j &= u_j u_i, & |i-j| > 1, \label{unicorn2} \\
        u_i u_{i+1} u_i &= u_{i+1} u_i u_{i+1}, & 1 \le i \le n-2, \label{unicorn3} \\
        u_i \ba &= s_i(\ba) u_i,& 1 \le i \le n-1,\ \ba \in A^{\otimes n}. \label{unicorn4}
    \end{align}
    Under this isomorphism, $u_i$ corresponds to the crossing of strands $i$ and $i+1$, while $1^{\otimes (n-i)} \otimes a \otimes 1^{\otimes (i-1)}$ corresponds to a token labeled $a$ on strand $i$.  As usual, we number strands from right to left.
\end{prop}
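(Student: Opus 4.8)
The plan is to extract a superalgebra presentation of $\nilCoxalg_n(A) = \End_{\nilCox(A)}(\go^{\otimes n})$ directly from the monoidal presentation of $\nilCox(A)$ via \cref{zebra}, and then to check that the resulting presentation coincides with the free-product presentation in the statement. Applying \cref{zebra} to $\nilCox(A)$, the generators of $\nilCoxalg_n(A)$ are the horizontal translates of the crossing and of the tokens, while the relations are the translates of \cref{tokrel} and \cref{NCrel} together with the super-interchange relations \cref{zebra2}. I would set $u_i$ to be the crossing of strands $i$ and $i+1$ and identify the token labeled $a$ on strand $i$ with $1^{\otimes(n-i)} \otimes a \otimes 1^{\otimes(i-1)} \in A^{\otimes n}$, as in the statement.

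First I would dispatch the tokens. The translates of \cref{tokrel} together with the interchange relations among tokens are exactly the relations underlying \cref{minotaur}, so the tokens generate a copy of $A^{\otimes n}$ inside $\nilCoxalg_n(A)$, matching one factor of the free product. Since the crossing is declared even, each $u_i$ is an even generator. The first relation in \cref{NCrel} and its translates give $u_i^2 = 0$, i.e.\ \cref{unicorn1}; the braid relation (the second picture in \cref{NCrel}) and its translates give \cref{unicorn3}; and \cref{zebra2} applied to two crossings on disjoint strands gives $u_i u_j = u_j u_i$ for $|i-j|>1$, i.e.\ \cref{unicorn2} (the interchange sign is trivial because crossings are even).

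The main obstacle is \cref{unicorn4}. Here I would first reduce to the case where $\ba$ is a single token: since each $s_i$ is an even superalgebra automorphism of $A^{\otimes n}$, the identities $u_i \ba = s_i(\ba) u_i$ and $u_i \bc = s_i(\bc) u_i$ give $u_i \ba \bc = s_i(\ba) s_i(\bc) u_i = s_i(\ba\bc) u_i$, so it suffices to verify the relation on the generating single-strand tokens. For a token on a strand $j \notin \{i,i+1\}$, the relation is the interchange relation \cref{zebra2} between the (even) crossing and the token, and $s_i$ fixes that tensor factor. For a token $a$ on strand $i$ (resp.\ $i+1$), tracing the physical strand through the crossing and applying the relevant one of the last two (token-slide) relations in \cref{NCrel} moves it to strand $i+1$ (resp.\ $i$); since the other tensor factor is the identity, no sign appears, matching $s_i$ on a single factor. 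The Koszul sign $(-1)^{\bar{a}_i \bar{a}_{i+1}}$ of \cref{swap} is never seen directly in these base cases, but reappears automatically in the multiplicativity step, since the product of two single-strand tokens in $A^{\otimes n}$ already carries this sign and $s_i$ respects it; this compatibility of the diagrammatic and algebraic sign conventions is the one point I would verify carefully.

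Finally, I would confirm that the two relation sets are equivalent. Every super-interchange relation \cref{zebra2} involves either two tokens (absorbed into the relations defining $A^{\otimes n}$), two crossings on disjoint strands (\cref{unicorn2}), or a crossing and a disjoint token (the $j \notin \{i,i+1\}$ case of \cref{unicorn4}), while the interaction of a crossing with a token on an adjacent strand is governed solely by the token-slide relations, which we have shown are equivalent to \cref{unicorn4}. Hence the presentation produced by \cref{zebra} has precisely the generators and relations of the stated free product, yielding the desired isomorphism of superalgebras.
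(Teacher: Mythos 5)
Your proposal is correct and follows essentially the same route as the paper, whose entire proof is the single citation of \cref{zebra}; you have simply spelled out the bookkeeping of matching the translated monoidal relations and super-interchange relations to \cref{unicorn1,unicorn2,unicorn3,unicorn4}, including the correct observation that the Koszul sign in \cref{swap} is accounted for by $s_i$ being a superalgebra automorphism. No gaps.
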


\begin{proof}
    This follows from \cref{zebra}.
\end{proof}

In what follows, we will identify $\nilCoxalg_n(A)$ with the algebra presented as in \cref{unicorn}.

\begin{rem}
    Note that, in fact, the definition of $\nilCox(A)$, and hence $\nilCoxalg_n(A)$, only involves the superalgebra structure on $A$, and not the trace map.  Thus these are defined for any superalgebra $A$.  The trace map will be important later.
\end{rem}

For $w \in \fS_n$, define
\[
    u_w := u_{i_1} u_{i_2} \dotsm u_{i_k},
\]
where $w = s_{i_1} s_{i_2} \dotsm s_{i_k}$ is a reduced expression for $w$.  By \cref{unicorn2,unicorn3}, the element $u_w$ is independent of the choice of reduced expression.

When $A = \kk$, the algebra $\nilCoxalg_n(\kk)$ is purely even and is the usual nilCoxeter algebra.  It is well known that $\nilCoxalg_n(\kk)$ has basis given by the $u_w$, $w \in \fS_n$, with multiplication given by
\[
    u_w u_v =
    \begin{cases}
        u_{wv} & \text{if } \ell(wv) = \ell(w) + \ell(v), \\
        0 & \text{otherwise},
    \end{cases}
\]
where $\ell(w)$ is the length of the element $w \in \fS_n$.  For general $A$, we have an isomorphism of $\kk$-modules
\begin{equation}
    \nilCoxalg_n(A) \cong A^{\otimes n} \otimes \nilCoxalg_n(\kk),
\end{equation}
and the two factors are subalgebras.

\section{Frobenius nilHecke algebras}

We now introduce the tower of Frobenius nilHecke algebras.

\begin{defin}
    The \emph{Frobenius nilHecke category} $\nilHecke(A)$ is the strict monoidal supercategory with one generating object $\go$ and generating morphisms
    \[
        \crossgen,\quad \dotstrand,\quad \tokstrand,\ a \in A,
    \]
    subject to the relations \cref{tokrel}, \cref{NCrel}, and
    \begin{equation} \label{dotslide}
        \begin{tikzpicture}[centerzero]
            \draw (0.3,-0.3) -- (-0.3,0.3);
            \draw (-0.3,-0.3) -- (0.3,0.3);
            \opendot{-0.15,-0.15};
        \end{tikzpicture}
        \ -\
        \begin{tikzpicture}[centerzero]
            \draw (0.3,-0.3) -- (-0.3,0.3);
            \draw (-0.3,-0.3) -- (0.3,0.3);
            \opendot{0.15,0.15};
        \end{tikzpicture}
        \ =\
        \begin{tikzpicture}[centerzero]
            \draw (-0.2,-0.3) -- (-0.2,0.3);
            \draw (0.2,-0.3) -- (0.2,0.3);
            \teleport{-0.2,0}{0.2,0};
        \end{tikzpicture}
        \ ,\quad
        \begin{tikzpicture}[centerzero]
            \draw (-0.3,-0.3) -- (0.3,0.3);
            \draw (0.3,-0.3) -- (-0.3,0.3);
            \opendot{-0.15,0.15};
        \end{tikzpicture}
        \ -\
        \begin{tikzpicture}[centerzero]
            \draw (-0.3,-0.3) -- (0.3,0.3);
            \draw (0.3,-0.3) -- (-0.3,0.3);
            \opendot{0.15,-0.15};
        \end{tikzpicture}
        \ = (-1)^\trp\
        \begin{tikzpicture}[centerzero]
            \draw (-0.2,-0.3) -- (-0.2,0.3);
            \draw (0.2,-0.3) -- (0.2,0.3);
            \teleport{-0.2,0}{0.2,0};
        \end{tikzpicture}
        \ ,\quad
        \begin{tikzpicture}[centerzero]
            \draw (0,-0.3) -- (0,0.3);
            \opendot{0,-0.13};
            \token{0,0.13}{east}{a};
        \end{tikzpicture}
        \ = (-1)^{\trp \bar{a}}\
        \begin{tikzpicture}[centerzero]
            \draw (0,-0.3) -- (0,0.3);
            \opendot{0,0.13};
            \token{0,-0.13}{west}{a};
        \end{tikzpicture}
        \ ,\ a \in A.
    \end{equation}
    We refer to the generator $\dotstrand$ as a \emph{dot} and declare it to be of parity $\trp$.  The crossings are even and the parity of the token $\tokstrand$ is the parity of $a$.  For $n \in \N$, we define the \emph{Frobenius nilHecke algebra}
    \[
        \nilHeckealg_n(A) := \End_{\nilHecke(A)}(\go^{\otimes n}).
    \]
\end{defin}

For $1 \le i \le n-1$, define
\begin{equation} \label{taui}
    \tau_i := 1^{\otimes (n-i-1)} \otimes \tau \otimes 1^{\otimes (i-1)} \in A^{\otimes n},
\end{equation}
where $\tau$ is defined as in \cref{tau}.

\begin{prop} \label{pegasus}
    As a superalgebra, $\nilHeckealg_n(A)$ is isomorphic to the free product of $\nilCox(A)$ and the free associative superalgebra on generators $x_1,\dotsc,x_n$ of parity $\trp$, subject to the relations
    \begin{align}
        x_i x_j &= (-1)^\trp x_j x_i, & i \ne j, \label{NHxx} \\
        \ba x_i &= (-1)^{\trp \bar{\ba}} x_i \ba,& \ba \in A^{\otimes n},\ 1 \le i \le n, \label{NHax} \\
        x_j u_i &= u_i x_j, & j \notin \{i,i+1\}, \\
        u_i x_{i+1} &= x_i u_i + \tau_i, & 1 \le i \le n-1, \\
        x_{i+1} u_i &= u_i x_i + (-1)^\trp \tau_i, & 1 \le i \le n-1.
    \end{align}
\end{prop}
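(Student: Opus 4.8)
The plan is to apply \cref{zebra} to the category $\nilHecke(A)$ and then match the resulting algebra presentation against the asserted one. By \cref{zebra}, the algebra $\nilHeckealg_n(A) = \End_{\nilHecke(A)}(\go^{\otimes n})$ is generated by the whiskerings of the three types of generating morphism: the crossings, which give the $u_i$ (crossing of strands $i$ and $i+1$); the tokens, which give a copy of $A^{\otimes n}$ via \cref{minotaur}; and the dots, which give the $x_i$ (a dot on strand $i$), $1 \le i \le n$, each of parity $\trp$. The morphisms built from crossings and tokens alone, subject to the whiskerings of \cref{tokrel} and \cref{NCrel}, present exactly $\nilCoxalg_n(A)$ by \cref{unicorn}. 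Hence it suffices to identify the relations produced by \cref{zebra} that involve the dots, and to check that they coincide with those listed in the proposition (so that the whole algebra is the asserted free product of $\nilCoxalg_n(A)$ with the free superalgebra on $x_1,\dotsc,x_n$).

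There are two families of dot relations. First, the whiskered relations \cref{zebra1} coming from \cref{dotslide} give: from the third relation of \cref{dotslide}, the commutation $a x_i = (-1)^{\trp \bar a} x_i a$ of a dot with a token on the \emph{same} strand; and from the first two relations of \cref{dotslide}, the dot--crossing identities. Here the whiskering onto strands $i,i+1$ of the teleporter is precisely $\tau_i$ of \cref{taui}, since the teleporter corresponds to $\tau$ under \cref{minotaur}. Reading the diagrams bottom-to-top and numbering strands from right to left, the first relation translates to $u_i x_{i+1} = x_i u_i + \tau_i$ and the second to $x_{i+1} u_i = u_i x_i + (-1)^\trp \tau_i$.

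Second, the super interchange relations \cref{zebra2} supply the relations asserting that generators supported on disjoint strands super-commute. Two dots on distinct strands give $x_i x_j = (-1)^\trp x_j x_i$ for $i \ne j$ (relation \cref{NHxx}), the sign being $(-1)^\trp$ because both dots have parity $\trp$. A dot and a token on different strands give $a x_i = (-1)^{\trp \bar a} x_i a$, and a dot and a crossing on disjoint strands give $x_j u_i = u_i x_j$ for $j \notin \{i,i+1\}$, the crossing being even. Combining the same-strand commutation from \cref{dotslide} with the different-strand commutation from interchange, and writing a general $\ba \in A^{\otimes n}$ as a product of single-strand tokens, yields the full relation $\ba x_i = (-1)^{\trp \bar\ba} x_i \ba$ of \cref{NHax}; the signs multiply correctly since the parities of the single-strand factors sum to $\bar\ba$. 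Assembling all of this, the relations produced by \cref{zebra} are exactly the defining relations of $\nilCoxalg_n(A)$ together with the relations in the statement, giving the claimed presentation.

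The step requiring the most care — and the one I would expect to be the main obstacle — is the faithful translation of the two dot-slide relations of \cref{dotslide} into the algebraic identities $u_i x_{i+1} = x_i u_i + \tau_i$ and $x_{i+1} u_i = u_i x_i + (-1)^\trp \tau_i$. One must correctly track which strand the dot lands on after passing the crossing (hence whether one obtains $x_i$ or $x_{i+1}$), verify the extra sign $(-1)^\trp$ in the second relation, and confirm that the whiskered teleporter is $\tau_i$. Everything else is routine bookkeeping with \cref{zebra}, using that the crossing-and-token part of the presentation is already handled by \cref{unicorn}.
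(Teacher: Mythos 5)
Your proposal is correct and takes the same route as the paper, whose entire proof is the single line ``This follows from \cref{zebra}''; you have simply spelled out the bookkeeping (whiskered generators, whiskered relations from \cref{dotslide}, and the interchange relations \cref{zebra2}) that the paper leaves implicit. Your translation of the two dot-slide diagrams into $u_i x_{i+1} = x_i u_i + \tau_i$ and $x_{i+1} u_i = u_i x_i + (-1)^\trp \tau_i$, including the strand numbering from right to left and the sign $(-1)^\trp$, is accurate.
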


\begin{proof}
    This follows from \cref{zebra}.
\end{proof}

We will give an explicit basis of $\nilHeckealg_n(A)$ in the next section (\cref{basis}) after we describe a natural action on the Frobenius polynomial algebra.

\begin{egs}
    \begin{enumerate}
        \item When $A = \kk$, we have $\tau_i = 1$, for all $1 \le i \le n-1$, and $\trp = \even$.  It follows that $\nilHeckealg_n(\kk)$, which is purely even, is the usual nilHecke algebra.

        \item When $A = \Cl$ is the rank two Clifford superalgebra, we will see in \cref{sec:Clifford} that $\nilHeckealg_n(\Cl)$ is Morita equivalent to the odd nilHecke algebra of \cite{EKL14}.

        \item When $A$ is the group algebra of a finite cyclic group, $\nilHeckealg_n(A)$ is the nil Yokonuma--Hecke algebra; see \cite[\S 3]{Cui16}.
    \end{enumerate}
\end{egs}

\begin{prop}
    Up to isomorphism, the category $\nilHecke(A)$ depends only on the underlying superalgebra $A$, and not on the trace map.  Hence the same is true of the algebras $\nilHeckealg_n(A)$.
\end{prop}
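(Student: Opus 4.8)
The plan is to fix the underlying superalgebra $A$ and compare the two categories attached to two symmetric trace maps $\tr$ and $\tr'$, of parities $\trp$ and $\trp'$ respectively. The first observation is that the trace enters the definition of $\nilHecke(A)$ only through the teleporters on the right-hand sides of the dot-slide relations \cref{dotslide}: the token relations \cref{tokrel} and the nilCoxeter relations \cref{NCrel} are insensitive to $\tr$, exactly as in the remark following \cref{unicorn}. Consequently, any functor that is the identity on $\go$, on crossings, and on all tokens automatically respects \cref{tokrel} and \cref{NCrel}, and only the three relations of \cref{dotslide} need to be verified. The structural input I would isolate first is a lemma: any two symmetric trace maps on $A$ are related by $\tr'(a) = \tr(za)$ for a unique homogeneous invertible element $z \in A$ of parity $\trp + \trp'$, and this $z$ is supercentral. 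Existence, uniqueness, and invertibility of $z$ follow from nondegeneracy of the forms $\tr(xy)$ and $\tr'(xy)$; supercentrality is forced by imposing \cref{jackal} on $\tr'$ and then cancelling the free argument using nondegeneracy of $\tr$, which yields $bz = (-1)^{\bar b \bar z} zb$ for all $b$.

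Granting the lemma, a direct computation from the defining formula \cref{tau}, using the relation $b^{\vee\prime} = z^{-1} b^\vee$ between the two dual bases, shows that the elements $\tau, \tau' \in A^{\otimes 2}$ satisfy $\tau = (1 \otimes z)\tau'$, together with the companion identity $(z \otimes 1)\tau' = (-1)^{\bar z}\tau$ extracted from the train relation \cref{train}. I then define $F \colon \nilHecke(A,\tr) \to \nilHecke(A,\tr')$ to be the identity on the generating object, on every crossing, and on every token, and to send the dot to the dot of $\nilHecke(A,\tr')$ with a token labeled $z$ placed below it. Since $z$ has parity $\trp + \trp'$, this composite has parity $\trp$, matching the parity of the dot, so $F$ is parity-preserving, as required of a functor of supercategories.

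It then remains to check that $F$ respects the three relations of \cref{dotslide}. For the two dot-slide relations I would push the extra token labeled $z$ through the crossing via \cref{NCrel} and teleport it across the teleporter via \cref{train} and \cref{teleport}; the two teleporter identities above convert the target relations, which carry $\tau'$ and the sign $(-1)^{\trp'}$, into the source relations, which carry $\tau$ and $(-1)^{\trp}$, the factor $(-1)^{\bar z} = (-1)^{\trp+\trp'}$ being precisely what reconciles $(-1)^{\trp'}$ with $(-1)^{\trp}$. For the dot-token relation, supercentrality of $z$ together with the sign $(-1)^{\trp'\bar a}$ coming from the target instance of \cref{NHax} reproduces the source sign $(-1)^{\trp\bar a}$. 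Replacing $z$ by $z^{-1}$ defines an inverse functor $G$, and $GF$ and $FG$ fix all generators, using that the token labeled $z z^{-1} = 1$ is trivial by \cref{tokrel}; hence $F$ is an isomorphism of supercategories, and passing to endomorphism superalgebras yields $\nilHeckealg_n(A,\tr) \cong \nilHeckealg_n(A,\tr')$. I expect the main obstacle to be the lemma on supercentrality of $z$, and, relatedly, the careful sign bookkeeping in \cref{dotslide} when $\trp \ne \trp'$: there the dot genuinely changes parity under $F$, and the compensating token $z$ must be inserted on the correct side of the dot if one is to avoid a stray sign.
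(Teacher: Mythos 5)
Your proposal is correct and follows essentially the same route as the paper: both proofs produce a homogeneous invertible element relating the two trace maps (the paper cites \cite[Prop.~3.4]{PS16} for its existence, where you prove it directly along with supercentrality) and then define an isomorphism that fixes the object, crossings, and tokens while twisting the dot by a token for that element. Your write-up simply supplies the sign bookkeeping and the $\tau$-versus-$\tau'$ comparison that the paper leaves as ``straightforward to verify.''
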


\begin{proof}
    Let $\tr_1$ and $\tr_2$ be two trace maps on $A$.  We use the notation $\nilHecke(A,\tr_i)$, instead of $\nilHecke(A)$, to denote the nilHecke category corresponding to the Frobenius algebra $A$ with trace map $\tr_i$ for $i \in \{1,2\}$. Then there exists a homogeneous invertible element $u \in A$ such that $\tr_2(a) = \tr_1(au)$ for all $a \in A$.  (For a proof of this fact in the setting of superalgebras, see \cite[Prop.~3.4]{PS16}.)  Then it is straightforward to verify that we have an isomorphism of strict monoidal supercategories
    \begin{equation}
        \nilHecke(A,\tr_1) \to \nilHecke(A,\tr_2),\quad
        \crossgen \mapsto \crossgen\ ,\quad
        \dotstrand \mapsto (-1)^{\bar{u}}\
        \begin{tikzpicture}[centerzero]
            \draw (0,-0.3) -- (0,0.3);
            \opendot{0,0.13};
            \token{0,-0.13}{west}{u};
        \end{tikzpicture}
        \ ,\quad
        \tokstrand \mapsto \tokstrand,\ a \in A.
        \qedhere
    \end{equation}
\end{proof}

For the usual nilHecke algebra (i.e.\ $A = \kk$), one often considers the grading given by declaring the $x_i$ to be of degree $2$ and the $u_i$ to be of degree $-2$.  This can be generalized to the setting of Frobenius nilHecke algebras as follows.  Suppose that $A$ is a $\Z$-graded Frobenius superalgebra and that the trace map is homogeneous of $\Z$-degree $-d$ and parity $\trp$.  It follows that $\tau$ has $\Z$-degree $d$.  Then $\nilHecke(A)$ is a strict graded monoidal supercategory where we define the $\Z$-degrees of the generators to be
\begin{equation}
    \deg(\crossgen) = d-2,\quad
    \deg(\dotstrand) = d+2,\quad
    \deg(\tokstrand) = \deg(a),\ a \in A.
\end{equation}
It follows that the Frobenius nilHecke algebras $\nilHeckealg_n(A)$ are $\Z$-graded, with
\begin{equation}
    \deg(u_i) = d-2,\quad
    \deg(x_j) = d+2,
\end{equation}
and where the degree of $\ba$, considered an element of $\nilHeckealg_n(A)$, is the same as its degree as an element of $A^{\otimes n}$.

There are two symmetries of $\nilHecke(A)$ that induce symmetries of the Frobenius nilHecke algebras.  First, we have an isomorphism of monoidal supercategories
\begin{equation}
    \Omega_\leftrightarrow \colon \nilHecke(A) \to \nilHecke(A)^\rev,\qquad
    \crossgen \mapsto - \crossgen,\quad
    \dotstrand \mapsto \dotstrand,\quad
    \tokstrand \mapsto \tokstrand,\ a \in A.
\end{equation}
Here $\nilHecke(A)^\rev$ denotes the reversed supercategory, with the same objects, morphisms $f^\rev \colon X \to Y$ for $f \in \Hom_{\nilHecke(A)}(X,Y)$ (and the map $f \mapsto f^\rev$ is $\kk$-linear), composition $f^\rev \circ g^\rev = (f \circ g)^\rev$, and tensor product $f^\rev \otimes g^\rev = (g \otimes f)^\rev$.  On a string diagram representing a morphism in $\nilHecke(A)$, $\Omega_\leftrightarrow$ is given by reflecting diagrams in a vertical line and multiplying by $(-1)^k$, where $k$ is the number of crossings in the diagram.  This functor induces isomorphisms of superalgebras
\begin{equation}
    \omega_\leftrightarrow \colon \nilHeckealg_n(A) \to \nilHeckealg_n(A),\quad
    u_i \mapsto - u_{n-i},\quad
    x_i \mapsto x_{n+1-i},\quad
    \ba \mapsto \pi (\ba),
\end{equation}
where $\pi$ is the longest element of $\fS_n$.

The second symmetry of $\nilHecke(A)$ is the isomorphism of monoidal supercategories
\begin{equation}
    \Omega_\updownarrow \colon \nilHecke(A) \to \nilHecke(A)^\op,\qquad
    \crossgen \mapsto (-1)^\trp\ \crossgen,\quad
    \dotstrand \mapsto \dotstrand,\quad
    \tokstrand \mapsto \tokstrand,\ a \in A.
\end{equation}
Here $\nilHecke(A)^\op$ denotes the opposite supercategory, with the same objects, morphisms $f^\op \colon Y \to X$ for $f \in \Hom_{\nilHecke(A)}(X,Y)$ (and the map $f \mapsto f^\op$ is $\kk$-linear), composition $f^\op \circ g^\op = (-1)^{\bar{f} \bar{g}} (g \circ f)^\op$, and tensor product $f^\op \otimes g^\op = (f \otimes g)^\op$.  On a string diagram representing a morphism in $\nilHecke(A)$, $\Omega_\updownarrow$ is given by reflecting diagrams in a horizontal line and multiplying by $(-1)^{k \trp}$, where $k$ is the number of crossings in the diagram.  This functor induces isomorphisms of superalgebras
\begin{equation}
    \omega_\updownarrow \colon \nilHeckealg_n(A) \to \nilHeckealg_n(A)^\op,\quad
    u_i \mapsto (-1)^\trp u_i,\quad
    x_i \mapsto x_i,\quad
    \ba \mapsto \ba.
\end{equation}

\section{Frobenius polynomial algebras\label{sec:Pol}}

In this section, we describe how the Frobenius nilHecke algebra acts on the Frobenius polynomial algebra via Frobenius divided difference operators.  This allows us to prove a basis theorem for the Frobenius nilHecke algebra.

\begin{defin}
    We define the \emph{Frobenius polynomial category} $\Pol(A)$ to be the strict monoidal supercategory with one generating object $\go$ and generating morphisms
    \[
        \dotstrand,\quad \tokstrand,\ a \in A,
    \]
    subject to the relations \cref{tokrel} and
    \begin{equation}
        \begin{tikzpicture}[centerzero]
            \draw (0,-0.3) -- (0,0.3);
            \opendot{0,-0.13};
            \token{0,0.13}{east}{a};
        \end{tikzpicture}
        \ = (-1)^{\trp \bar{a}}\
        \begin{tikzpicture}[centerzero]
            \draw (0,-0.3) -- (0,0.3);
            \opendot{0,0.13};
            \token{0,-0.13}{west}{a};
        \end{tikzpicture}
        \ ,\ a \in A.
    \end{equation}
    The dot $\dotstrand$ has parity $\trp$ and the parity of the token $\tokstrand$ is the parity of $a$.  For $n \in \N$, we define the \emph{Frobenius polynomial algebra}
    \[
        \Polalg_n(A) := \End_{\Pol(A)}(\go^{\otimes n}).
    \]
\end{defin}

\begin{prop}
    As a superalgebra, $\Polalg_n(A)$ is isomorphic to the free product of $A^{\otimes n}$ and the free associative superalgebra on the generators $x_1,\dotsc,x_n$ of parity $\trp$, subject to the relations \cref{NHxx,NHax}.
\end{prop}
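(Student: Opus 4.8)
The plan is to apply \cref{zebra} in exactly the same way as in the proofs of \cref{unicorn} and \cref{pegasus}, the only difference being that $\Pol(A)$ has no crossing generator, so the resulting presentation omits the $u_i$ and all relations involving them. The category $\Pol(A)$ is generated by the single object $\go$ with generating endomorphisms given by the dot $\dotstrand$ and the tokens $\tokstrand$, all of which are endomorphisms of $\go$ (so $n_i = 1$ in the notation of \cref{zebra}). Thus \cref{zebra} presents $\Polalg_n(A) = \End_{\Pol(A)}(\go^{\otimes n})$ with generators obtained by placing each of these on each of the $n$ strands: the dots $x_1,\dotsc,x_n$, each of parity $\trp$, and the tokens on each strand.

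First I would identify the token part. The token relations \cref{tokrel} placed on each strand, together with the super interchange relations \cref{zebra2} between tokens on distinct strands, are exactly the relations that, via the isomorphism \cref{minotaur}, assemble the single-strand tokens into a copy of $A^{\otimes n}$; this is the same identification already used in \cref{unicorn,pegasus}.

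Next I would read off the relations involving the dots. Since all generators are endomorphisms of $\go$, the super interchange relations \cref{zebra2} only ever relate morphisms on two distinct strands. For two dots on strands $i \ne j$, both of parity $\trp$, this yields $x_i x_j = (-1)^{\trp\trp} x_j x_i = (-1)^{\trp} x_j x_i$, which is \cref{NHxx}; here one uses the elementary fact that $\trp\,\trp = \trp$ in $\Z_2$. For a dot on strand $i$ and a token $a$ on strand $j$ with $i \ne j$, super interchange gives $a_j x_i = (-1)^{\trp \bar a} x_i a_j$, while for a dot and a token on the same strand $i$ the dot--token relation defining $\Pol(A)$ gives $a_i x_i = (-1)^{\trp \bar a} x_i a_i$. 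Combining these two cases and extending multiplicatively over a product of single-strand tokens (parities being additive) yields $\ba x_i = (-1)^{\trp \bar\ba} x_i \ba$ for every $\ba \in A^{\otimes n}$, which is \cref{NHax}.

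Finally I would assemble the presentation. The generators split into the tokens, generating $A^{\otimes n}$, and the dots $x_1,\dotsc,x_n$, which among themselves satisfy only \cref{NHxx} (there is no analogue of $u_i^2 = 0$, as dots on a single strand stack freely), so the dots generate the free associative superalgebra on generators of parity $\trp$ modulo \cref{NHxx}; the only relations mixing the two families are \cref{NHax}. This is precisely the free product of $A^{\otimes n}$ with the free superalgebra on $x_1,\dotsc,x_n$ subject to \cref{NHxx}, quotiented by \cref{NHax}, and the map sending tokens to $A^{\otimes n}$ and dots to the $x_i$ is the desired isomorphism. The only point requiring care---and the only place a sign could go wrong---is matching the interchange signs of \cref{zebra2} to the stated signs in \cref{NHxx,NHax} and checking that \cref{NHax}, verified for single-strand tokens, propagates to all of $A^{\otimes n}$; both are routine given that parities are additive and $\trp^2 = \trp$.
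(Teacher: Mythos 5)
Your proposal is correct and follows exactly the route the paper takes: the paper's proof is simply an appeal to \cref{zebra}, and your argument spells out the details of that application (reading off the generators, assembling the tokens into $A^{\otimes n}$ via \cref{minotaur}, and matching the interchange signs to \cref{NHxx,NHax}) without any gaps. The sign bookkeeping, including the use of $\trp\,\trp = \trp$ in $\Z_2$ and the additivity of parities over products of tokens, is all accurate.
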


\begin{proof}
    This follows from \cref{zebra}.
\end{proof}

The algebra $\Polalg_n(\kk)$ has basis
\[
    x_1^{k_1} x_2^{k_2} \dotsm x_n^{k_n},\quad k_1,k_2,\dotsc,k_n \in \N.
\]
We have an isomorphism of $\kk$-modules
\begin{equation}
    \Polalg_n(A) \cong A^{\otimes n} \otimes \Polalg_n(\kk),
\end{equation}
and the two factors are subalgebras.  When $\trp=\even$, the algebra $\Polalg_n(\kk) = \kk[x_1,\dotsc,x_n]$ is the usual polynomial algebra.

The action \cref{swap} of the symmetric group $\fS_n$ on $A^{\otimes n}$ extends naturally to an action on $\Polalg_n(A)$ via superalgebra isomorphisms, where
\begin{equation}
    s_i(x_j) = x_{s_i(j)},\quad 1 \le j \le n,\ 1 \le i \le n-1.
\end{equation}

We define the \emph{Frobenius divided difference operators} (or \emph{Frobenius Demazure operators}) to be the linear operators $\partial_i \colon \Polalg_n(A) \to \Polalg_n(A)$, $1 \le i \le n-1$, defined inductively by
\begin{equation} \label{horse}
    \partial_i(\ba) = 0,\quad
    \partial_i(x_j) =
    \begin{cases}
        (-1)^{\trp+1} \tau_i & \text{if } j = i, \\
        \tau_i & \text{if } j = i+1, \\
        0 & \text{otherwise},
    \end{cases}
\end{equation}
for $\ba \in A^{\otimes n}$, and the twisted Leibniz rule
\begin{equation}
    \partial_i(fg) = \partial_i(f) g + s_i(f) \partial_i(g),\quad f,g \in \Polalg_n(A).
\end{equation}

\begin{lem} \label{baboon}
    If $\trp = \even$, then, for $1 \le i \le n-1$, we have
    \begin{equation} \label{mule}
        \partial_i(\ba p) = \tau_i \ba \frac{p - s_i(p)}{x_{i+1}-x_i},
    \end{equation}
    for all $\ba \in A^{\otimes n}$ and $p \in \kk[x_1,\dotsc,x_n]$.  In particular, if $A = \kk$, then $\partial_i$ is the usual divided difference operator.
\end{lem}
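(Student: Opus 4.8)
The plan is to reduce the formula to the corresponding statement about the classical divided difference operator $\partial_i^{\mathrm{cl}}(p) := \frac{p - s_i(p)}{x_{i+1} - x_i}$ on $\kk[x_1,\dotsc,x_n]$, and then reincorporate the Frobenius data. Since $\trp = \even$, relations \cref{NHxx,NHax} say that the $x_j$ are central and commute with every $\ba \in A^{\otimes n}$, so $\Polalg_n(A) \cong A^{\otimes n} \otimes \kk[x_1,\dotsc,x_n]$ with $\tau_i$ even and commuting with all polynomials; this is the feature that makes all signs drop out.

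First I would peel off the $A$-part. Applying the twisted Leibniz rule with $f = \ba$ and $g = p$, and using $\partial_i(\ba) = 0$ from \cref{horse}, gives
\[
    \partial_i(\ba p) = \partial_i(\ba) p + s_i(\ba) \partial_i(p) = s_i(\ba) \partial_i(p),
\]
so it suffices to compute $\partial_i(p)$ for $p \in \kk[x_1,\dotsc,x_n]$ and then move the Frobenius factors into place.

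The heart of the argument is the identity $\partial_i(p) = \tau_i \partial_i^{\mathrm{cl}}(p)$ for $p \in \kk[x_1,\dotsc,x_n]$, which I would prove by induction on the degree of $p$ (extending by $\kk$-linearity from monomials). Both sides vanish on constants, while on the generators $x_j$ the values agree: since $\trp = \even$ we have $\partial_i(x_i) = -\tau_i = \tau_i \partial_i^{\mathrm{cl}}(x_i)$, $\partial_i(x_{i+1}) = \tau_i = \tau_i \partial_i^{\mathrm{cl}}(x_{i+1})$, and $\partial_i(x_j) = 0 = \tau_i \partial_i^{\mathrm{cl}}(x_j)$ for $j \notin \{i,i+1\}$. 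For the inductive step, writing a monomial as $x_j q$ and using that $\tau_i$ commutes with the polynomial $s_i(x_j)$, one checks that the operator $p \mapsto \tau_i \partial_i^{\mathrm{cl}}(p)$ satisfies the same twisted Leibniz rule as $\partial_i$; the classical Leibniz identity $\partial_i^{\mathrm{cl}}(fg) = \partial_i^{\mathrm{cl}}(f)g + s_i(f)\partial_i^{\mathrm{cl}}(g)$ then closes the induction.

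Finally I would combine the pieces. Substituting $\partial_i(p) = \tau_i \partial_i^{\mathrm{cl}}(p)$ into the first display gives $\partial_i(\ba p) = s_i(\ba)\tau_i \partial_i^{\mathrm{cl}}(p)$. The $\trp = \even$ specialization of \cref{train}, applied in the factors $i$ and $i+1$, yields $\ba \tau_i = \tau_i s_i(\ba)$ and hence $s_i(\ba)\tau_i = \tau_i \ba$; together with the centrality of $\partial_i^{\mathrm{cl}}(p)$ this produces $\partial_i(\ba p) = \tau_i \ba \frac{p - s_i(p)}{x_{i+1}-x_i}$, as claimed, and the case $A = \kk$ is immediate since then $\tau_i = 1$. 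I expect the only delicate points to be verifying that $p \mapsto \tau_i \partial_i^{\mathrm{cl}}(p)$ obeys the twisted Leibniz rule and transporting \cref{train} from $A^{\otimes 2}$ to $\tau_i$ inside $A^{\otimes n}$; both are routine precisely because $\trp = \even$ removes all the Koszul signs.
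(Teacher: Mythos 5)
Your proof is correct and follows essentially the same route as the paper: both arguments show that the closed formula $\ba p \mapsto \tau_i \ba \frac{p - s_i(p)}{x_{i+1}-x_i}$ agrees with $\partial_i$ on generators and satisfies the twisted Leibniz rule (using the classical Leibniz identity for $\frac{p-s_i(p)}{x_{i+1}-x_i}$ and relation \cref{train} to move $\tau_i$ past $s_i(\ba)$), hence coincides with $\partial_i$. The only difference is organizational: you peel off the $A^{\otimes n}$-factor first and induct on polynomial degree, whereas the paper verifies the Leibniz rule in one step for general elements $\ba p$, $\bb q$.
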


\begin{proof}
    For the sake of the proof, let $\partial_i'$ denote the operator $\partial_i$ of \cref{mule}.  It is clear that $\partial_i'(\ba) = \partial_i(\ba)$ for $\ba \in A^{\otimes n}$ and $\partial_i'(x_j) = \partial_i(x_j)$ for all $j$.  Now, for $\ba, \bb \in A^{\otimes n}$ and $p,q \in \kk[x_1,\dotsc,x_n]$, we have
    \begin{multline*}
        \partial_i'(\ba p \bb q)
        = \partial_i'(\ba \bb p q)
        = \tau_i \ba \bb \frac{pq - s_i(pq)}{x_{i+1}-x_i}
        = \tau_i \ba \bb \left( \frac{p - s_i(p)}{x_{i+1}-x_i} q + s_i(p) \frac{q-s_i(q)}{x_{i+1}-x_i} \right)
        \\
        \overset{\cref{train}}{=} \partial_i'(\ba p) \bb q + s_i(\ba p) \partial_i'(\bb q).
    \end{multline*}
    Hence $\partial_i'$ also satisfies the twisted Leibniz rule, and hence agrees with $\partial_i$.
\end{proof}

We have natural maps $\Polalg_n(A) \to \nilHeckealg_n(A)$ and $\nilCoxalg_n(\kk) \to \nilHeckealg_n(A)$ mapping $\ba \in A^{\otimes n}$, $x_i$, and $u_j$ to the elements of $\nilHeckealg_n(A)$ denoted by the same symbols.  Abusing notation, we will view elements of $\Polalg_n(A)$ and $\nilCoxalg_n(\kk)$ as elements of $\nilHeckealg_n(A)$ via their images under these natural maps.  (It will follow from \cref{BT} that these maps are injective, but we do not use that fact yet.)

\begin{theo} \label{BT}
    The map
    \begin{equation} \label{platypus}
        \Polalg_n(A) \otimes \nilCoxalg_n(\kk) \to \nilHeckealg_n(A),\quad
        f \otimes z \mapsto fz,
    \end{equation}
    extended by $\kk$-linearity, is an isomorphism of $\kk$-modules.
\end{theo}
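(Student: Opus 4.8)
The plan is to show the map, call it $\Phi$, is surjective by a straightening procedure and injective by producing an explicit left inverse arising from a module structure on the source $M := \Polalg_n(A) \otimes \nilCoxalg_n(\kk)$.

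The starting point is the identity
\[
    u_i f = s_i(f)\, u_i + \partial_i(f), \qquad f \in \Polalg_n(A),\ 1 \le i \le n-1,
\]
valid in $\nilHeckealg_n(A)$. One checks it on the generators $\ba \in A^{\otimes n}$ and $x_j$ directly from \cref{pegasus} and \cref{horse} — it reproduces exactly $u_i \ba = s_i(\ba) u_i$, $u_i x_{i+1} = x_i u_i + \tau_i$, and $u_i x_i = x_{i+1} u_i - (-1)^\trp \tau_i$ — and it then propagates to all products, hence to all of $\Polalg_n(A)$, precisely because $\partial_i$ obeys the twisted Leibniz rule (the crossing is even, so no signs intervene). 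Granting this, surjectivity of $\Phi$ is immediate: in any product of generators one repeatedly replaces $u_i f$ by $s_i(f) u_i + \partial_i(f)$, pushing all tokens and dots to the left of all crossings; since the correction $\partial_i(f)$ has one fewer crossing, a double induction (on the number of crossings, then on the number of crossing–polynomial inversions) terminates, and the nilCoxeter relations \cref{unicorn1,unicorn2,unicorn3} collapse the surviving crossings to a single $u_w$. Thus the $f u_w$ span $\nilHeckealg_n(A)$.

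For injectivity I would promote this rewriting to an action. Define $\kk$-linear operators on $M$ by letting $\ba$ and $x_j$ act by left multiplication on the first tensor factor and setting
\[
    u_i \cdot (f \otimes u_w) := s_i(f) \otimes u_i u_w + \partial_i(f) \otimes u_w,
\]
with $u_i u_w$ computed in $\nilCoxalg_n(\kk)$. The claim is that these satisfy the defining relations of \cref{pegasus}, making $M$ a $\nilHeckealg_n(A)$-module. The relations among $A^{\otimes n}$ and the $x_j$ hold because these act by left multiplication on an algebra; the relations $u_i \ba = s_i(\ba) u_i$ and $x_j u_i = u_i x_j$ ($j \notin \{i,i+1\}$), as well as $u_i x_{i+1} = x_i u_i + \tau_i$ and $x_{i+1} u_i = u_i x_i + (-1)^\trp \tau_i$, reduce by short computations to $\partial_i(\ba) = 0$, \cref{horse}, and the twisted Leibniz rule. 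Once $M$ is a module, set $\Psi \colon \nilHeckealg_n(A) \to M$, $h \mapsto h \cdot (1 \otimes 1)$. An easy induction gives $u_w \cdot (1 \otimes 1) = 1 \otimes u_w$ and hence $\Psi(f u_w) = f \otimes u_w$, so $\Psi \circ \Phi = \id_M$. Therefore $\Phi$ is injective, and with the surjectivity above it is the desired isomorphism of $\kk$-modules.

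The main obstacle is the remaining part of the module check: the three nilCoxeter relations for the operators $u_i$. Expanding $u_i^2 \cdot (f \otimes u_w)$ and using $u_i^2 = 0$ in $\nilCoxalg_n(\kk)$ shows it vanishes iff $\partial_i^2 = 0$ and $\partial_i s_i + s_i \partial_i = 0$; the latter reduces to the identity $s_i(\tau_i) = (-1)^\trp \tau_i$, which follows from \cref{doubledual} and the basis-independence of $\tau$. Distant commutativity reduces to $\partial_i \partial_j = \partial_j \partial_i$ for $|i-j|>1$, and the braid relation reduces, after cancelling the nilCoxeter braid relation on the $u_w$-factor, to the operator identity $\partial_i \partial_{i+1} \partial_i = \partial_{i+1} \partial_i \partial_{i+1}$. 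These identities for the $\partial_i$ are the genuinely Frobenius-theoretic input: via \cref{train} they amount to braid-type relations for the elements $\tau_i \in A^{\otimes n}$ (notably $\tau_i \tau_{i+1} \tau_i = \tau_{i+1} \tau_i \tau_{i+1}$), which hold for any symmetric Frobenius superalgebra. When $\trp = \even$ all of them follow transparently from the explicit formula of \cref{baboon}, which I would verify first as a warm-up before treating the general (possibly odd) trace.
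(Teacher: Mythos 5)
Your proposal is correct and follows essentially the same route as the paper: surjectivity via the straightening identity $u_i f = s_i(f)\,u_i + \partial_i(f)$, and injectivity by equipping $\Polalg_n(A) \otimes \nilCoxalg_n(\kk)$ with exactly the module structure the paper uses and evaluating at $1 \otimes 1$. You merely spell out more of the ``straightforward, but lengthy, verification'' (the nilCoxeter relations for the operators, reducing to $\partial_i^2=0$, $\partial_i s_i + s_i\partial_i = 0$, and the braid relations for the $\partial_i$) that the paper leaves implicit.
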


\begin{proof}
    The proof uses standard techniques, and so we only sketch the argument here.  Let $\psi$ be the map \cref{platypus}.  First, one notes that, using the defining relations of $\nilHeckealg_n(A)$, elements of $\Polalg_n(A)$ can be moved to the left of elements of $\nilCoxalg_n(A)$ modulo terms of lower degree in the $x_i$.  This shows that $\psi$ is surjective.

    Next, it is a straightforward, but lengthy, verification to check that $\nilHeckealg_n(A)$ acts on $\Polalg_n(A) \otimes \nilCoxalg_n(\kk)$ by
    \begin{equation} \label{bill}
        g \cdot (f \otimes z) = gf \otimes z, \qquad
        u_i \cdot (f \otimes z) = s_i(f) \otimes u_i z + \partial_i(f) \otimes z,
    \end{equation}
    for $f,g \in \Polalg_n(A)$, $z \in \nilCoxalg_n(\kk)$, and $1 \le i \le n-1$.  Then, since
    \[
        \psi(f \otimes u_w) \cdot (1 \otimes 1) = f \otimes u_w,\quad
        f \in \Polalg_n(A),\ w \in \fS_n,
    \]
    the map $\psi$ is also injective.
\end{proof}

\begin{cor} \label{basis}
    The Frobenius nilHecke algebra $\nilHeckealg_n(A)$ has basis
    \[
        \ba x_1^{k_1} \dotsm x_n^{k_n} u_w,\quad
        \ba \in \B_A^{\otimes n},\ k_1,\dotsc,k_n \in \N,\ w \in \fS_n.
    \]
    (Recall that $\B_A$ is a basis of $A$.)
\end{cor}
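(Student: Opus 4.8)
The plan is to read the statement off \cref{BT} once bases of the two tensor factors are in hand. First I would record a basis of $\Polalg_n(A)$: combining the known basis $\{x_1^{k_1}\dotsm x_n^{k_n} : k_1,\dotsc,k_n\in\N\}$ of $\Polalg_n(\kk)$ with the basis $\B_A^{\otimes n}$ of $A^{\otimes n}$, and invoking the $\kk$-module isomorphism $\Polalg_n(A)\cong A^{\otimes n}\otimes\Polalg_n(\kk)$ (under which both tensorands are subalgebras), I obtain that
\[
    \ba\, x_1^{k_1}\dotsm x_n^{k_n},\quad \ba\in\B_A^{\otimes n},\ k_1,\dotsc,k_n\in\N,
\]
is a $\kk$-basis of $\Polalg_n(A)$. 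The relation \cref{NHax} is what guarantees that every such product can be written with the $\ba$-factor collected on the left, so that these products are exactly the image of the tensor-product basis under multiplication. Second, I would recall that $\{u_w : w\in\fS_n\}$ is a $\kk$-basis of $\nilCoxalg_n(\kk)$.

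With these bases fixed, the elements $(\ba\, x_1^{k_1}\dotsm x_n^{k_n})\otimes u_w$ form a $\kk$-basis of $\Polalg_n(A)\otimes\nilCoxalg_n(\kk)$. Since \cref{BT} asserts that the map $\psi \colon f\otimes z\mapsto fz$ is an isomorphism of $\kk$-modules, and an isomorphism carries a basis to a basis, the images
\[
    \ba\, x_1^{k_1}\dotsm x_n^{k_n}\, u_w,\quad \ba\in\B_A^{\otimes n},\ k_1,\dotsc,k_n\in\N,\ w\in\fS_n,
\]
constitute a $\kk$-basis of $\nilHeckealg_n(A)$, which is the assertion.

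There is essentially no obstacle left at this stage: the substantive content is entirely contained in \cref{BT}, whose proof already builds the faithful action on $\Polalg_n(A)\otimes\nilCoxalg_n(\kk)$ forcing $\psi$ to be injective (surjectivity being handled by the degree-filtration argument there). The only point requiring a moment's care is the first step---confirming that the products $\ba\, x_1^{k_1}\dotsm x_n^{k_n}$, with the Frobenius-algebra factor gathered on the left, genuinely form a \emph{basis} of $\Polalg_n(A)$ rather than merely a spanning set---but this is precisely what the stated $\kk$-module decomposition $\Polalg_n(A)\cong A^{\otimes n}\otimes\Polalg_n(\kk)$ furnishes.
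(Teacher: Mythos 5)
Your proposal is correct and follows exactly the route the paper intends: the corollary is stated without a separate proof precisely because it is the immediate consequence of \cref{BT} obtained by tensoring the basis $\ba\,x_1^{k_1}\dotsm x_n^{k_n}$ of $\Polalg_n(A)$ with the basis $u_w$ of $\nilCoxalg_n(\kk)$ and pushing through the $\kk$-module isomorphism. Your added remark that \cref{NHax} lets one gather the $A^{\otimes n}$-factor on the left is a sensible point of care, though it is already packaged into the stated decomposition $\Polalg_n(A)\cong A^{\otimes n}\otimes\Polalg_n(\kk)$.
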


In light of \cref{BT}, we may view $A^{\otimes n}$, $\Polalg_n(A)$, and $\nilCoxalg_n(A)$ as subalgebras of $\nilHecke(A)$.

\begin{cor}
    In $\nilHeckealg_n(A)$, we have
    \[
        u_i f = s_i(f) u_i + \partial_i(f),\quad
        1 \le i \le n-1,\ f \in \Polalg_n(A).
    \]
\end{cor}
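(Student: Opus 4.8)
The plan is to use the faithful module structure established in the proof of \cref{BT} and reduce the identity to a computation on a single cyclic vector. Write $M := \Polalg_n(A) \otimes \nilCoxalg_n(\kk)$, equipped with the $\nilHeckealg_n(A)$-action \cref{bill}. Both sides of the asserted identity are elements of $\nilHeckealg_n(A)$, so it suffices to compare their values on the vector $1 \otimes 1 \in M$. Indeed, the proof of \cref{BT} records that $\psi(f \otimes u_w) \cdot (1 \otimes 1) = f \otimes u_w$, which says exactly that the $\kk$-linear evaluation map $\mathrm{ev} \colon \nilHeckealg_n(A) \to M$, $a \mapsto a \cdot (1 \otimes 1)$, satisfies $\mathrm{ev} \circ \psi = \id_M$. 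Combined with the surjectivity of $\psi$ from \cref{BT}, this makes $\mathrm{ev}$ a bijection, in particular injective; hence two elements of $\nilHeckealg_n(A)$ that act the same on $1 \otimes 1$ must be equal.

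First I would evaluate the left-hand side. Using the first rule of \cref{bill} with $g = f \in \Polalg_n(A)$ gives $f \cdot (1 \otimes 1) = f \otimes 1$, and then the second rule yields
\[
    (u_i f) \cdot (1 \otimes 1) = u_i \cdot (f \otimes 1) = s_i(f) \otimes u_i + \partial_i(f) \otimes 1 .
\]
Next I would evaluate the right-hand side. Since $1 = 1^{\otimes n} \in A^{\otimes n}$ we have $\partial_i(1) = 0$ and $s_i(1) = 1$ (by \cref{horse} and \cref{swap}), so the second rule of \cref{bill} gives $u_i \cdot (1 \otimes 1) = 1 \otimes u_i$; applying $s_i(f)$ and $\partial_i(f)$ via the first rule then gives
\[
    \bigl( s_i(f) u_i + \partial_i(f) \bigr) \cdot (1 \otimes 1)
    = s_i(f) \otimes u_i + \partial_i(f) \otimes 1 .
\]
The two expressions coincide, so the identity follows from the injectivity of $\mathrm{ev}$.

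The only points requiring care, rather than genuine obstacles, are the bookkeeping in these two evaluations: one must use that a general $f \in \Polalg_n(A)$ (a $\kk$-combination of elements $\ba x_1^{k_1} \dotsm x_n^{k_n}$) acts on $M$ through the first rule of \cref{bill}, and that $\partial_i$ annihilates the scalar $1 \in A^{\otimes n}$. Since all the substantial work — the surjectivity of $\psi$ and the verification that \cref{bill} defines a genuine action — is already carried out in the proof of \cref{BT}, this corollary is a short consequence and presents no real difficulty.
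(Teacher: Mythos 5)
Your proof is correct and takes essentially the same approach as the paper: the paper's own justification is simply ``this follows from \cref{bill}'', and your argument---evaluating both sides of the identity on the cyclic vector $1 \otimes 1$ and invoking the injectivity of the evaluation map, which is exactly what the proof of \cref{BT} supplies---is the natural filling-in of that one-line proof. The two evaluations are computed correctly, so nothing further is needed.
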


\begin{proof}
    This follows from \cref{bill}.
\end{proof}

\begin{prop}
    There is an action of $\nilHeckealg_n(A)$ on $\Polalg_n(A)$ given by
    \begin{equation} \label{cardinal}
        x_i \cdot f = x_i f,\quad
        u_j \cdot f = \partial_j(f),\quad
        1 \le i \le n,\ 1 \le j \le n-1,\ f \in \Polalg_n(A).
    \end{equation}
\end{prop}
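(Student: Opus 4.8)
The plan is to avoid verifying the defining relations of \cref{pegasus} by hand, and instead to realize $\Polalg_n(A)$ as a quotient module of the $\nilHeckealg_n(A)$-module $M := \Polalg_n(A) \otimes \nilCoxalg_n(\kk)$ whose action is recorded in \cref{bill}. Since the (admittedly lengthy) check that \cref{bill} really defines an action was already carried out in the proof of \cref{BT}, I would leverage that directly rather than repeat it.

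Concretely, let $\nilCoxalg_n(\kk)^+ := \Span\{u_w : w \in \fS_n,\ w \ne e\}$ be the augmentation ideal of the nilCoxeter algebra (the span of all $u_w$ with $w$ not the identity $e$), and set $N := \Polalg_n(A) \otimes \nilCoxalg_n(\kk)^+ \subseteq M$. The first step is to check that $N$ is a $\nilHeckealg_n(A)$-submodule. Since $\nilHeckealg_n(A)$ is generated by the tokens $\ba$, the dots $x_i$, and the crossings $u_i$, and the first two act through the left-multiplication part $g \cdot (f \otimes z) = gf \otimes z$ of \cref{bill} (which only alters the $\Polalg_n(A)$-factor and hence clearly preserves $N$), it suffices to treat $u_i$. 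For $w \ne e$ we have $u_i \cdot (f \otimes u_w) = s_i(f) \otimes u_i u_w + \partial_i(f) \otimes u_w$; the second summand already lies in $N$, and for the first I would invoke the nilCoxeter multiplication rule: $u_i u_w$ equals $u_{s_i w}$ when $\ell(s_i w) = \ell(w)+1$ and is $0$ otherwise. As $s_i w = e$ would force $w = s_i$ and hence $\ell(s_i w) = 0 < \ell(w)+1$, the product $u_i u_w$ never has a nonzero $u_e$-component, so $u_i u_w \in \nilCoxalg_n(\kk)^+ \cup \{0\}$ and $s_i(f) \otimes u_i u_w \in N$. Thus $N$ is a submodule.

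The final step is to identify the quotient. The $\kk$-linear map $M/N \to \Polalg_n(A)$, $f \otimes 1 + N \mapsto f$, is an isomorphism because $M = (\Polalg_n(A) \otimes \kk u_e) \oplus N$. Transporting the action of \cref{bill} through this isomorphism, left multiplication gives $\ba \cdot f = \ba f$ and $x_i \cdot f = x_i f$, while $u_i \cdot (f \otimes 1) = s_i(f) \otimes u_i + \partial_i(f) \otimes 1 \equiv \partial_i(f) \otimes 1 \pmod{N}$, since $u_i = u_{s_i}$ with $s_i \ne e$. Hence $u_i \cdot f = \partial_i(f)$, and these are exactly the formulas of \cref{cardinal}. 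The only genuinely substantive point is the submodule verification, and within it the observation that $u_i u_w$ can never produce the identity term $u_e$ for $w \ne e$; everything else follows immediately from \cref{bill}.
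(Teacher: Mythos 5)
Your proof is correct and is essentially the paper's own second argument: the paper constructs the action as the induced module $\nilHeckealg_n(A) \otimes_{\nilCoxalg_n(\kk)} \triv$ from the trivial $\nilCoxalg_n(\kk)$-module, which is exactly your quotient $M/N$ in different packaging, since inducing from $\triv$ amounts to killing $\Polalg_n(A) \otimes \nilCoxalg_n(\kk)^+$ inside $M$. Like the paper, your argument ultimately rests on the lengthy-but-omitted verification that \cref{bill} defines an action; granting that, your explicit check that $N$ is a submodule (via the observation that $u_i u_w$ never produces a $u_e$-component for $w \ne e$) and your identification of the quotient with $\Polalg_n(A)$ carrying the action \cref{cardinal} are both correct.
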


\begin{proof}
    This can be proved in two ways.  The first is to check directly, via a straightforward computation, that this action satisfies the defining relations of $\nilHeckealg_n(A)$.  The second is to construct this action as an induced module.  Let $\triv$ denote the trivial rank one $\nilCoxalg_n(\kk)$-module on which $u_i$ acts as zero for all $1 \le i \le n-1$.  In then follows from \cref{BT} that we have an isomorphism of $\kk$-modules
    \[
        \nilHeckealg_n(A) \otimes_{\nilCox_n(\kk)} \triv \cong \Polalg_n(A).
    \]
    Under this isomorphism, the action \cref{bill} corresponds precisely to the action \cref{cardinal}.
\end{proof}

We refer to the action \cref{cardinal} as the \emph{polynomial representation} of $\nilHeckealg_n(A)$.  When $A = \kk$, it corresponds to the well-known action of the nilCoxeter algebra on the polynomial ring $\kk[x_1,\dotsc,x_n]$ via divided difference operators.

\begin{rem}
    The polynomial representation of the usual nilCoxeter algebra $\nilCoxalg_n(\kk)$ is faithful.  However, this is \emph{not} true for general $A$.  As an example, consider the case where $A = \kk[y]/y^2 \kk[y]$ is purely even, with trace map given by $\tr(\lambda + \mu y) = \mu$ for $\lambda,\mu \in \kk$.  Then $A$ has basis $\B_A = \{1,y\}$ with dual basis given by $1^\vee = y$, $y^\vee = 1$.  The element $(y \otimes y) u_1 \in \nilHeckealg_2(A)$ is nonzero by \cref{BT}.  However, for $\ba \in A^{\otimes n}$ and $f \in \kk[x_1,\dotsc,x_n]$, we have
    \[
        (y \otimes y) u_1  \cdot (\ba f)
        = (y \otimes y) \partial_i(\ba f)
        \overset{\cref{mule}}{=} (y \otimes y) (1 \otimes y + y \otimes 1) \ba \frac{f - s_1(f)}{x_2-x_1}
        = 0.
    \]
    Thus $(y \otimes y)u_1$ acts as zero.
\end{rem}

\section{The Clifford case and odd nilHecke algebras\label{sec:Clifford}}

In this section we consider the special case when $A = \Cl$ is the rank 2 Clifford superalgebra.  By definition, $\Cl$ is the superalgebra generated by a single odd generator $c$ subject to the relation $c^2=1$.  It follows that $\Cl$ is free with basis $\B_\Cl = \{1,c\}$.  We fix the \emph{odd} trace map
\[
    \tr \colon \Cl \to \kk,\quad \tr(1) = 0,\ \tr(c) = 1.
\]
Hence $\trp = \odd$.  It is straightforward to verify that $\Cl$ is a symmetric Frobenius superalgebra with this trace map, with dual basis given by
\[
    1^\vee = c,\qquad c^\vee = 1.
\]
Thus, for $1 \le i \le n-1$, we have
\begin{equation}
    \tau_i = c_i - c_{i+1},\quad \text{where }
    c_i := 1^{\otimes (n-i)} \otimes c \otimes 1^{\otimes (i-1)} \in \Cl^{\otimes n}.
\end{equation}
(Note that $\Cl$ is \emph{not} symmetric under the even trace map $1 \mapsto 1$, $c \mapsto 0$.)

Consider the \emph{Clifford nilCoxeter category} $\nilCox(\Cl)$ and the \emph{Clifford nilHecke category} $\nilHecke(\Cl)$, which are the Frobenius nilCoxeter category and the Frobenius nilHecke category, respectively, for the special choice of $A = \Cl$.  We define
\[
    \cltokstrand := \tokstrand[c].
\]
Thus, for example, we have
\[
    \begin{tikzpicture}[centerzero]
        \draw (0,-0.3) -- (0,0.3);
        \cltoken{0,-0.13};
        \cltoken{0,0.13};
    \end{tikzpicture}
    \ =\
    \begin{tikzpicture}[centerzero]
        \draw (0,-0.3) -- (0,0.3);
    \end{tikzpicture}
    \qquad \text{and} \qquad
    \begin{tikzpicture}[centerzero]
        \draw (-0.2,-0.2) -- (-0.2,0.2);
        \draw (0.2,-0.2) -- (0.2,0.2);
        \teleport{-0.2,0}{0.2,0};
    \end{tikzpicture}
    \ =\
    \begin{tikzpicture}[centerzero]
        \draw (-0.2,-0.2) -- (-0.2,0.2);
        \draw (0.2,-0.2) -- (0.2,0.2);
        \cltoken{0.2,0};
    \end{tikzpicture}
    \ -\
    \begin{tikzpicture}[centerzero]
        \draw (-0.2,-0.2) -- (-0.2,0.2);
        \draw (0.2,-0.2) -- (0.2,0.2);
        \cltoken{-0.2,0};
    \end{tikzpicture}
    \ .
\]
We call $\nilCoxalg_n(\Cl)$ the \emph{Clifford nilCoxeter algebra} and we call $\nilHeckealg_n(\Cl)$ the \emph{Clifford nilHecke algebra}.  The goal of this section is to compare these to the odd nilCoxeter algebra and odd nilHecke algebra.

\begin{defin}
    The \emph{odd nilCoxeter category} $\ONC$ is the strict monoidal supercategory with one generating object $\go$ and one odd generating morphism
    \[
        \crossgen,
    \]
    subject to the relations
    \begin{equation} \label{oddNC}
        \begin{tikzpicture}[anchorbase]
            \draw (0.2,-0.5) \braidup (-0.2,0) \braidup (0.2,0.5);
            \draw (-0.2,-0.5) \braidup (0.2,0) \braidup (-0.2,0.5);
        \end{tikzpicture}
        \ =\
        0
        \ ,\quad
        \begin{tikzpicture}[anchorbase]
            \draw (0.4,-0.5) -- (-0.4,0.5);
            \draw (0,-0.5) \braidup (-0.4,0) \braidup (0,0.5);
            \draw (-0.4,-0.5) -- (0.4,0.5);
        \end{tikzpicture}
        \ =\
        \begin{tikzpicture}[anchorbase]
            \draw (0.4,-0.5) -- (-0.4,0.5);
            \draw (0,-0.5) \braidup (0.4,0) \braidup (0,0.5);
            \draw (-0.4,-0.5) -- (0.4,0.5);
        \end{tikzpicture}
        \ .
    \end{equation}
    For $n \in \N$, the \emph{odd nilCoxeter algebra} is the superalgebra
    \[
        \ONCalg_n := \End_{\ONC}(\go^{\otimes n}).
    \]
\end{defin}

\begin{defin}[{cf.\ \cite[\S3]{EKL14}}]
    The \emph{odd nilHecke category} $\ONH$ is the strict monoidal supercategory with one generating object $\go$ and odd generating morphisms
    \[
        \dotstrand,\quad \crossgen,
    \]
    subject to the relations \cref{oddNC} and
    \begin{equation} \label{fox}
        \begin{tikzpicture}[centerzero]
            \draw (0.3,-0.3) -- (-0.3,0.3);
            \draw (-0.3,-0.3) -- (0.3,0.3);
            \opendot{-0.15,-0.15};
        \end{tikzpicture}
        \ +\
        \begin{tikzpicture}[centerzero]
            \draw (0.3,-0.3) -- (-0.3,0.3);
            \draw (-0.3,-0.3) -- (0.3,0.3);
            \opendot{0.15,0.15};
        \end{tikzpicture}
        \ =\
        \begin{tikzpicture}[centerzero]
            \draw (-0.2,-0.3) -- (-0.2,0.3);
            \draw (0.2,-0.3) -- (0.2,0.3);
        \end{tikzpicture}
        \ =\
        \begin{tikzpicture}[centerzero]
            \draw (-0.3,-0.3) -- (0.3,0.3);
            \draw (0.3,-0.3) -- (-0.3,0.3);
            \opendot{-0.15,0.15};
        \end{tikzpicture}
        \ +\
        \begin{tikzpicture}[centerzero]
            \draw (-0.3,-0.3) -- (0.3,0.3);
            \draw (0.3,-0.3) -- (-0.3,0.3);
            \opendot{0.15,-0.15};
        \end{tikzpicture}
        \ .
    \end{equation}
    For $n \in \N$, the \emph{odd nilHecke algebra} is the superalgebra
    \[
        \ONHalg_n := \End_{\ONH}(\go^{\otimes n}).
    \]
\end{defin}

The following proposition shows that the above definition of the odd nilHecke algebra agrees with the definition of \cite[\S2.2]{EKL14}.

\begin{prop} \label{griffon}
    As a superalgebra, $\ONCalg_n$ is isomorphic to the superalgebra with odd generators $v_1,\dotsc,v_{n-1}$, subject to the relations
    \begin{align}
        v_i^2 &=0, & 1 \le i \le n-1, \label{ONC1} \\
        v_i v_j &= - v_j v_i, & |i-j| > 1, \label{ONC2} \\
        v_i v_{i+1} v_i &= v_{i+1} v_i v_{i+1}, & 1 \le i \le n-2, \label{ONC3}
    \end{align}
    As a superalgebra, $\ONHalg_n$ is isomorphic to the superalgebra with odd generators $y_1,\dotsc,y_n,v_1,\dotsc,v_{n-1}$, subject to the relations \cref{ONC1,ONC2,ONC3} and
    \begin{align}
        y_i y_j &= - y_j y_i, & i \ne j, \\
        y_j v_i &= - v_i y_j, & i \notin \{j,j+1\}, \\
        v_i y_{i+1} + y_i v_i &= 1, & 1 \le i \le n-2, \\
        y_{i+1} v_i + v_i x_i &= 1, & 1 \le i \le n-2.
    \end{align}
\end{prop}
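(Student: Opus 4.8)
The plan is to obtain both presentations as direct applications of \cref{zebra}, exactly as in the proof of \cref{unicorn}. Each of $\ONC$ and $\ONH$ is a strict monoidal supercategory with a single generating object $\go$, whose generating morphisms are the odd crossing $\crossgen \in \End(\go^{\otimes 2})$ (and, for $\ONH$, additionally the odd dot $\dotstrand \in \End(\go)$), and whose defining relations are the endomorphism relations \cref{oddNC} (and \cref{fox}). Since all the hypotheses of \cref{zebra} are met, it yields a presentation of $\ONCalg_n = \End_\ONC(\go^{\otimes n})$ and of $\ONHalg_n = \End_\ONH(\go^{\otimes n})$ as superalgebras, and the whole proof consists of unwinding that presentation.

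For $\ONCalg_n$, let $v_i$ be the crossing of strands $i$ and $i+1$. The relations \cref{zebra1} arising from \cref{oddNC}, placed in all horizontal positions, give $v_i^2=0$ and the braid relation $v_iv_{i+1}v_i = v_{i+1}v_iv_{i+1}$, i.e.\ \cref{ONC1} and \cref{ONC3} (note that the three overlapping crossings produce no interchange sign, so the braid relation has coefficient $+1$). The super interchange relations \cref{zebra2} apply to two crossings on disjoint strands, $|i-j|>1$; since the crossing is odd, the interchange sign is $(-1)^{\odd\cdot\odd}=-1$, so these become the anticommutation relations $v_iv_j=-v_jv_i$ of \cref{ONC2}. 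This is precisely the computation behind \cref{unicorn}, the only difference being that the odd parity of the crossing turns the commuting relation \cref{unicorn2} into an anticommuting one.

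For $\ONHalg_n$, we additionally let $y_j$ be the dot on strand $j$. The relations \cref{oddNC} again give \cref{ONC1,ONC2,ONC3}, while \cref{zebra2} supplies the remaining interchange relations: two odd dots on distinct strands anticommute, $y_iy_j = -y_jy_i$; and an odd dot and an odd crossing on disjoint strands anticommute, $y_jv_i=-v_iy_j$ whenever strand $j$ is disjoint from strands $i,i+1$. The mixed relations come from placing \cref{fox} in all horizontal positions via \cref{zebra1}: reading each diagram from bottom to top and numbering strands from right to left, the first equality of \cref{fox} becomes $v_iy_{i+1}+y_iv_i=1$ and the second becomes $y_{i+1}v_i+v_iy_i=1$. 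Assembling these identities gives the asserted presentation, which one checks coincides with the one in \cite[\S2.2]{EKL14}.

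The only genuine work lies in the sign- and index-correct translation of the diagrammatic relations \cref{fox} into algebraic identities: one must track which strand each dot lies on before and after it passes the crossing, respect the right-to-left numbering, and write each composite with its top factor on the left. The conceptually important point, distinguishing this from the classical (even) case of \cref{unicorn}, is that both generators are odd, so every instance of the interchange law \cref{interchange} contributes a factor $-1$ and produces anticommutation rather than commutation; once this sign bookkeeping is settled, the proposition is immediate from \cref{zebra}.
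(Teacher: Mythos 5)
Your proposal is correct and takes exactly the route the paper intends: the paper omits an explicit proof of \cref{griffon}, but its analogues \cref{unicorn} and \cref{pegasus} are both proved by the single line ``this follows from \cref{zebra}'', and your application of \cref{zebra} with the sign bookkeeping for odd generators (and the correct bottom-to-top, right-to-left reading of \cref{fox}, which also silently fixes the statement's typo $v_i x_i$ for $v_i y_i$) is precisely that argument spelled out.
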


Let $\ONC(\Cl)$ be the strict monoidal category obtained from $\ONC$ by adjoining an extra odd morphism
\begin{equation} \label{greendot}
    \xtokstrand
\end{equation}
subject to the relations
\begin{equation} \label{greenslide}
    \begin{tikzpicture}[centerzero]
        \draw (0,-0.3) -- (0,0.3);
        \xtoken{0,-0.13};
        \xtoken{0,0.13};
    \end{tikzpicture}
    \ =\
    \begin{tikzpicture}[centerzero]
        \draw (0,-0.3) -- (0,0.3);
    \end{tikzpicture}
    \ ,\quad
    \begin{tikzpicture}[centerzero]
        \draw (-0.3,-0.3) -- (0.3,0.3);
        \draw (0.3,-0.3) -- (-0.3,0.3);
        \xtoken{-0.15,-0.15};
    \end{tikzpicture}
    \ = -\
    \begin{tikzpicture}[centerzero]
        \draw (-0.3,-0.3) -- (0.3,0.3);
        \draw (0.3,-0.3) -- (-0.3,0.3);
        \xtoken{-0.15,0.15};
    \end{tikzpicture}
    \ ,\quad
    \begin{tikzpicture}[centerzero]
        \draw (-0.3,-0.3) -- (0.3,0.3);
        \draw (0.3,-0.3) -- (-0.3,0.3);
        \xtoken{0.15,-0.15};
    \end{tikzpicture}
    \ = -\
    \begin{tikzpicture}[centerzero]
        \draw (-0.3,-0.3) -- (0.3,0.3);
        \draw (0.3,-0.3) -- (-0.3,0.3);
        \xtoken{0.15,0.15};
    \end{tikzpicture}
    \ .
\end{equation}
Let $\ONH(\Cl)$ be the strict monoidal category obtained from $\ONH$ by adjoint one extra odd morphism \cref{greendot} subject to the relations \cref{greenslide} and
\begin{equation} \label{hare}
    \begin{tikzpicture}[centerzero]
        \draw (0,-0.3) -- (0,0.3);
        \opendot{0,0.13};
        \xtoken{0,-0.13};
    \end{tikzpicture}
    \ = -\
    \begin{tikzpicture}[centerzero]
        \draw (0,-0.3) -- (0,0.3);
        \xtoken{0,0.13};
        \opendot{0,-0.13};
    \end{tikzpicture}
    \ .
\end{equation}
We have colored the extra generator green (instead of blue) to remind the reader that the relation involving this generator and the crossing is quite different from the relation involving usual tokens and the crossing.  We also have the green teleporter
\[
    \begin{tikzpicture}[centerzero]
        \draw (-0.2,-0.2) -- (-0.2,0.2);
        \draw (0.2,-0.2) -- (0.2,0.2);
        \xteleport{-0.2,0}{0.2,0};
    \end{tikzpicture}
    \ :=\
    \begin{tikzpicture}[centerzero]
        \draw (-0.2,-0.2) -- (-0.2,0.2);
        \draw (0.2,-0.2) -- (0.2,0.2);
        \xtoken{0.2,0};
    \end{tikzpicture}
    \ -\
    \begin{tikzpicture}[centerzero]
        \draw (-0.2,-0.2) -- (-0.2,0.2);
        \draw (0.2,-0.2) -- (0.2,0.2);
        \xtoken{-0.2,0};
    \end{tikzpicture}
    \ .
\]

For $n \in \N$, the isomorphisms of \cref{griffon} extend to isomorphisms of superalgebras
\begin{equation}
    \End_{\ONC(\Cl)}(\go^{\otimes n}) \cong \ONCalg_n \otimes \Cl^{\otimes n},\qquad
    \End_{\ONH(\Cl)}(\go^{\otimes n}) \cong \ONHalg_n \otimes \Cl^{\otimes n}.
\end{equation}
with a green token $\xtokstrand$ on the $i$-th strand mapping to the element $c_i$.

\begin{theo} \label{mirror}
    Suppose $2$ is invertible in the ground ring $\kk$.  We have an isomorphism of monoidal supercategories $\nilCox(\Cl) \to \ONC(\Cl)$ given on the generating object by $\go \mapsto \go$ and on the generating morphisms by
    \begin{equation} \label{mirror1}
        \cltokstrand \mapsto \xtokstrand,\quad
        \begin{tikzpicture}[centerzero]
            \draw (-0.3,-0.3) -- (0.3,0.3);
            \draw (0.3,-0.3) -- (-0.3,0.3);
        \end{tikzpicture}
        \mapsto
        \begin{tikzpicture}[centerzero]
            \draw (-0.3,-0.3) -- (0.3,0.3);
            \draw (0.3,-0.3) -- (-0.3,0.3);
            \xteleport{-0.15,0.15}{0.15,0.15};
        \end{tikzpicture}
        \ .
    \end{equation}
    The inverse is given by
    \begin{equation}
        \xtokstrand \mapsto \cltokstrand,\quad
        \begin{tikzpicture}[centerzero]
            \draw (-0.3,-0.3) -- (0.3,0.3);
            \draw (0.3,-0.3) -- (-0.3,0.3);
        \end{tikzpicture}
        \mapsto \frac{1}{2}\
        \begin{tikzpicture}[centerzero]
            \draw (-0.3,-0.3) -- (0.3,0.3);
            \draw (0.3,-0.3) -- (-0.3,0.3);
            \teleport{-0.15,0.15}{0.15,0.15};
        \end{tikzpicture}
        \ .
    \end{equation}
    This extends to an isomorphism of monoidal supercategories $\nilHecke(\Cl) \to \ONH(\Cl)$ by defining it on the dot to be
    \[
        \dotstrand \mapsto \dotstrand\ .
    \]
\end{theo}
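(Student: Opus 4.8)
The plan is to construct the two functors explicitly on generators, verify that each respects the defining relations (so that each is a well-defined monoidal functor), and then check that the two composites act as the identity on generators. All computations I carry out inside the endomorphism superalgebras, using the presentations of \cref{unicorn,griffon}. On $n$ strands I identify $\nilCoxalg_n(\Cl)$ with the algebra generated by the even crossings $u_i$ and the Clifford tokens $c_1,\dotsc,c_n$ (with $c_i^2=1$, $c_ic_j=-c_jc_i$), subject to $u_ic_j = c_{s_i(j)}u_i$; and I identify $\End_{\ONC(\Cl)}(\go^{\otimes n}) \cong \ONCalg_n \otimes \Cl^{\otimes n}$ with the algebra generated by the odd crossings $v_i$ and the green tokens $c_1,\dotsc,c_n$, where the super-tensor structure forces $v_ic_j = -c_jv_i$ for \emph{all} $i,j$. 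In both settings I write $\tau_i = c_i - c_{i+1}$ for the (blue, resp.\ green) teleporter, and the single identity I rely on repeatedly is $\tau_i^2 = 2$.

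First I check that $\phi\colon\nilCox(\Cl)\to\ONC(\Cl)$, sending $c_i\mapsto c_i$ and $u_i\mapsto\tau_iv_i$, is well defined. The token relations \cref{tokrel} reduce to $c_i^2=1$. For $u_i^2=0$ I compute $(\tau_iv_i)^2 = \tau_i(v_i\tau_i)v_i = -\tau_i^2v_i^2 = 0$, using $v_i\tau_i=-\tau_iv_i$ and $v_i^2=0$. The token-slide relations of \cref{NCrel} become $u_ic_i=c_{i+1}u_i$ and $u_ic_{i+1}=c_iu_i$, and a short calculation confirms, e.g., $(\tau_iv_i)c_{i+1}=(1-c_ic_{i+1})v_i=c_i(\tau_iv_i)$. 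The one genuinely nontrivial point, which I expect to be the main obstacle, is the braid relation: the even-crossing braid must be converted into the odd-crossing braid purely through the teleporters. Pushing all crossings to the right with $v_i\tau_j=-\tau_jv_i$ and the odd braid relation $v_1v_2v_1=v_2v_1v_2$, both sides of \cref{NCrel} collapse to $\pm\,\tau_1\tau_2\tau_1\,v_1v_2v_1$ and $\pm\,\tau_2\tau_1\tau_2\,v_2v_1v_2$, so preservation comes down to the Clifford identity $\tau_1\tau_2\tau_1=\tau_2\tau_1\tau_2$ in $\Cl^{\otimes 3}$; a direct expansion shows both sides equal $2(c_3-c_1)$.

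For the inverse $\psi\colon\ONC(\Cl)\to\nilCox(\Cl)$, sending $c_i\mapsto c_i$ and $v_i\mapsto\tfrac12\tau_iu_i$, the well-definedness checks run parallel: $\sigma^2=0$ follows from $u_i^2=0$; the green-slide relations \cref{greenslide} become the anticommutativity $(\tfrac12\tau_iu_i)c_j=-c_j(\tfrac12\tau_iu_i)$, verified case by case with $u_ic_j=c_{s_i(j)}u_i$; and the braid relation again reduces, after using $u_i\tau_j=s_i(\tau_j)u_i$, to an $s_i$-twisted product that once more evaluates to $2(c_3-c_1)$ on both sides. Mutual inverseness is where invertibility of $2$ is essential: since $\phi$ and $\psi$ are monoidal functors it suffices to check on generators, and $\psi(\phi(u_i))=\tau_i(\tfrac12\tau_iu_i)=\tfrac12\tau_i^2u_i=u_i$ and $\phi(\psi(v_i))=\tfrac12\tau_i(\tau_iv_i)=\tfrac12\tau_i^2v_i=v_i$, both using $\tau_i^2=2$, while both functors fix the tokens.

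Finally, for the extension $\nilHecke(\Cl)\to\ONH(\Cl)$ with $\dotstrand\mapsto\dotstrand$, I verify the additional relations from \cref{pegasus}, namely $u_ix_{i+1}=x_iu_i+\tau_i$ and $x_{i+1}u_i=u_ix_i-\tau_i$ (recall $\trp=\odd$), together with $c_ix_i=-x_ic_i$. The last relation is exactly \cref{hare}, and the dot/token anticommutativity $y_ic_j=-c_jy_i$ (from \cref{hare} and the super-interchange law) combined with the relations $v_iy_{i+1}+y_iv_i=1=y_{i+1}v_i+v_iy_i$ of \cref{fox} gives, for instance,
\[
    \phi(u_i)y_{i+1}=\tau_iv_iy_{i+1}=\tau_i(1-y_iv_i)=\tau_i+y_i\tau_iv_i=\tau_i+y_i\phi(u_i),
\]
which is the image of $u_ix_{i+1}=x_iu_i+\tau_i$; the companion relation is symmetric. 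This confirms that $\phi$ extends to an isomorphism of the nilHecke categories, completing the proof.
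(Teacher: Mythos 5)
Your proposal is correct and takes essentially the same approach as the paper, whose proof simply asserts that it is a straightforward exercise to check that the maps respect the defining relations and are mutually inverse (illustrating only the two dot-slide relations diagrammatically); you carry out that same verification in the endomorphism-algebra presentations and supply the details the paper omits, notably the braid-relation check via $\tau_1\tau_2\tau_1=\tau_2\tau_1\tau_2$ and the role of $\tau_i^2=2$ in the mutual-inverse step. (One trivial slip: in the $\psi$-direction braid check the common value is $2(c_1-c_3)$ rather than $2(c_3-c_1)$, which does not affect the argument since only the equality of the two sides is needed.)
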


\begin{proof}
    It is a straightforward exercise to verify that the given maps respect the defining relations and are mutually inverse.  For example, if $\Psi$ is the map \cref{mirror1}, then we have
    \begin{gather*}
        \Psi
        \left(
            \begin{tikzpicture}[centerzero]
                \draw (0.3,-0.3) -- (-0.3,0.3);
                \draw (-0.3,-0.3) -- (0.3,0.3);
                \opendot{-0.15,-0.15};
            \end{tikzpicture}
            \ -\
            \begin{tikzpicture}[centerzero]
                \draw (0.3,-0.3) -- (-0.3,0.3);
                \draw (-0.3,-0.3) -- (0.3,0.3);
                \opendot{0.15,0.15};
            \end{tikzpicture}
        \right)
        =
        \begin{tikzpicture}[centerzero]
            \draw (-0.4,-0.4) -- (0.4,0.4);
            \draw (0.4,-0.4) -- (-0.4,0.4);
            \xteleport{-0.15,0.15}{0.15,0.15};
            \opendot{-0.2,-0.2};
        \end{tikzpicture}
        -
        \begin{tikzpicture}[centerzero]
            \draw (-0.4,-0.4) -- (0.4,0.4);
            \draw (0.4,-0.4) -- (-0.4,0.4);
            \xteleport{-0.15,0.15}{0.15,0.15};
            \opendot{0.3,0.3};
        \end{tikzpicture}
        \overset{\cref{hare}}{=}
        \begin{tikzpicture}[centerzero]
            \draw (-0.4,-0.4) -- (0.4,0.4);
            \draw (0.4,-0.4) -- (-0.4,0.4);
            \xteleport{-0.15,0.15}{0.15,0.15};
            \opendot{-0.2,-0.2};
        \end{tikzpicture}
        +
        \begin{tikzpicture}[centerzero]
            \draw (-0.4,-0.4) -- (0.4,0.4);
            \draw (0.4,-0.4) -- (-0.4,0.4);
            \xteleport{-0.3,0.3}{0.3,0.3};
            \opendot{0.15,0.15};
        \end{tikzpicture}
        \overset{\cref{fox}}{=}
        \begin{tikzpicture}[centerzero]
            \draw (-0.2,-0.4) -- (-0.2,0.4);
            \draw (0.2,-0.4) -- (0.2,0.4);
            \xteleport{-0.2,0}{0.2,0};
        \end{tikzpicture}
        = \Psi
        \left(
            \begin{tikzpicture}[centerzero]
                \draw (-0.2,-0.3) -- (-0.2,0.3);
                \draw (0.2,-0.3) -- (0.2,0.3);
                \teleport{-0.2,0}{0.2,0};
            \end{tikzpicture}
        \right),
        \\
        \Psi
        \left(
            \begin{tikzpicture}[centerzero]
                \draw (0.3,-0.3) -- (-0.3,0.3);
                \draw (-0.3,-0.3) -- (0.3,0.3);
                \opendot{-0.15,0.15};
            \end{tikzpicture}
            \ -\
            \begin{tikzpicture}[centerzero]
                \draw (0.3,-0.3) -- (-0.3,0.3);
                \draw (-0.3,-0.3) -- (0.3,0.3);
                \opendot{0.15,-0.15};
            \end{tikzpicture}
        \right)
        =
        \begin{tikzpicture}[centerzero]
            \draw (-0.4,-0.4) -- (0.4,0.4);
            \draw (0.4,-0.4) -- (-0.4,0.4);
            \xteleport{-0.15,0.15}{0.15,0.15};
            \opendot{-0.3,0.3};
        \end{tikzpicture}
        -
        \begin{tikzpicture}[centerzero]
            \draw (-0.4,-0.4) -- (0.4,0.4);
            \draw (0.4,-0.4) -- (-0.4,0.4);
            \xteleport{-0.15,0.15}{0.15,0.15};
            \opendot{0.2,-0.2};
        \end{tikzpicture}
        \overset{\cref{hare}}{=} -
        \begin{tikzpicture}[centerzero]
            \draw (-0.4,-0.4) -- (0.4,0.4);
            \draw (0.4,-0.4) -- (-0.4,0.4);
            \xteleport{-0.3,0.3}{0.3,0.3};
            \opendot{-0.15,0.15};
        \end{tikzpicture}
        -
        \begin{tikzpicture}[centerzero]
            \draw (-0.4,-0.4) -- (0.4,0.4);
            \draw (0.4,-0.4) -- (-0.4,0.4);
            \xteleport{-0.15,0.15}{0.15,0.15};
            \opendot{0.3,-0.3};
        \end{tikzpicture}
        \overset{\cref{fox}}{=} -\
        \begin{tikzpicture}[centerzero]
            \draw (-0.2,-0.4) -- (-0.2,0.4);
            \draw (0.2,-0.4) -- (0.2,0.4);
            \xteleport{-0.2,0}{0.2,0};
        \end{tikzpicture}
        = \Psi
        \left(
            (-1)^\trp
            \begin{tikzpicture}[centerzero]
                \draw (-0.2,-0.3) -- (-0.2,0.3);
                \draw (0.2,-0.3) -- (0.2,0.3);
                \teleport{-0.2,0}{0.2,0};
            \end{tikzpicture}
        \right). \qedhere
    \end{gather*}
\end{proof}

\begin{cor} \label{hyena}
    Suppose $2$ is invertible in the ground ring $\kk$.  We have an isomorphism of superalgebras $\nilCoxalg_n(\Cl) \to \ONCalg_n \otimes \Cl^{\otimes n}$ given by
    \[
        c_i \mapsto c_i,\quad
        u_i \mapsto \tau_i v_i = (c_i - c_{i+1}) v_i.
    \]
    This extends to an isomorphism of superalgebras $\nilHeckealg_n(\Cl) \to \ONHalg_n \otimes \Cl^{\otimes n}$ by defining
    \[
        x_i \mapsto y_i.
    \]
    In particular, the Clifford nilCoxeter algebra and the odd nilCoxeter algebra are Morita equivalent, as are the Clifford nilHecke algebra and the odd nilHecke algebra.
\end{cor}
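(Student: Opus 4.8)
The plan is to deduce the corollary directly from \cref{mirror} together with the identifications $\End_{\ONC(\Cl)}(\go^{\otimes n}) \cong \ONCalg_n \otimes \Cl^{\otimes n}$ and $\End_{\ONH(\Cl)}(\go^{\otimes n}) \cong \ONHalg_n \otimes \Cl^{\otimes n}$ established just before that theorem. An isomorphism of strict monoidal supercategories restricts, on each object, to an isomorphism of endomorphism superalgebras. Applying this to the functor $\nilCox(\Cl) \to \ONC(\Cl)$ of \cref{mirror} on the object $\go^{\otimes n}$ gives
\[
    \nilCoxalg_n(\Cl) = \End_{\nilCox(\Cl)}(\go^{\otimes n}) \xrightarrow{\ \cong\ } \End_{\ONC(\Cl)}(\go^{\otimes n}) \cong \ONCalg_n \otimes \Cl^{\otimes n},
\]
and composing with the dot-extension of the functor yields the corresponding isomorphism $\nilHeckealg_n(\Cl) \cong \ONHalg_n \otimes \Cl^{\otimes n}$. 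Thus the only thing left to check is that the generators $c_i$, $u_i$, and $x_i$ map to the elements claimed.

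Tracking the generators through this composite is a matter of reading off \cref{mirror} under the diagram-to-algebra dictionary of \cref{griffon}. The Clifford token $c_i$ is the blue $c$-token $\cltokstrand$ on strand $i$, which \cref{mirror} sends to the green token $\xtokstrand$ on strand $i$; under the identification this is $c_i$, so $c_i \mapsto c_i$. The generator $u_i$ is the crossing of strands $i$ and $i+1$, and \cref{mirror} sends it to that crossing capped by a green teleporter at the top. Under the identification the bare crossing corresponds to $v_i \in \ONCalg_n$ and the green teleporter corresponds to $c_i - c_{i+1} = \tau_i \in \Cl^{\otimes n}$; since composition reads from bottom to top, the teleporter sitting above the crossing contributes the left factor, giving $u_i \mapsto \tau_i v_i = (c_i - c_{i+1}) v_i$. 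Finally, the dot $\dotstrand$ on strand $i$ is fixed by the functor and hence corresponds to $y_i$, so $x_i \mapsto y_i$.

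For the Morita statements, note that since $2$ is invertible the rank-two Clifford superalgebra $\Cl$ is a simple superalgebra, so each tensor power $\Cl^{\otimes n}$ is Morita equivalent to $\kk$ as a superalgebra. Because Morita equivalence is preserved under tensoring, $B \otimes \Cl^{\otimes n}$ is Morita equivalent to $B \otimes \kk \cong B$ for any superalgebra $B$; taking $B = \ONCalg_n$ and $B = \ONHalg_n$ shows that $\nilCoxalg_n(\Cl)$ is Morita equivalent to $\ONCalg_n$ and $\nilHeckealg_n(\Cl)$ is Morita equivalent to $\ONHalg_n$. I expect the proof to be essentially bookkeeping, since all of the substantive content is packaged in \cref{mirror} and the identifications preceding it. The one point demanding genuine care is pinning down the placement and ordering of the teleporter relative to the crossing so that the image of $u_i$ is the left product $\tau_i v_i$ rather than $v_i \tau_i$, with the right-to-left strand numbering and the super signs handled consistently; the supporting fact that $\Cl^{\otimes n}$ is Morita equivalent to $\kk$ should be cited or briefly justified.
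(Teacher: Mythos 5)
Your proposal is correct and follows essentially the same route as the paper, whose entire proof is the one-line observation that the result "follows immediately from Theorem~\ref{mirror} and the fact that $\Cl$ is a simple superalgebra"; you have simply spelled out the generator-tracking and the Morita step that the authors leave implicit. The only caveat is that your claim that $\Cl^{\otimes n}$ is Morita (super)equivalent to $\kk$ is delicate for odd $n$ (strictly, $\Cl^{\otimes n}$ is then superequivalent to $\Cl$ rather than to $\kk$), but the paper glosses over the same point by deferring to the analogous statements in the cited literature on Sergeev and spin Hecke algebras.
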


\begin{proof}
    This follows immediately from \cref{mirror} and the fact that $\Cl$ is a simple superalgebra.
\end{proof}

\Cref{hyena} should be viewed as a nil version of the well known fact that the Sergeev algebra is Morita equivalent to the spin symmetric group algebra, and the fact that the degenerate affine Hecke--Clifford algebra (also called the affine Sergeev algebra) is Morita equivalent to the degenerate spin affine Hecke algebra; see \cite[Th.~4.1]{Wan09}.  In the non-degenerate setting, one also has the analogous Morita equivalence between the (affine) Hecke--Clifford algebra and the spin (affine) Hecke algebra; see \cite[Th.~5.1]{Wan07}.

\section{The nonsymmetric case\label{sec:nonsym}}

For simplicity of exposition, we have assumed throughout this paper that the Frobenius superalgebra $A$ is \emph{symmetric}.  Most of the results also hold without this assumption, provided one makes the appropriate modifications.

Let $A$ be a Frobenius superalgebra, not necessarily symmetric, with trace map $\tr$ of parity $\trp$.  Then $A$ has a \emph{Nakayama automorphism} $\psi \colon A \to A$.  This is the superalgebra automorphism of $A$ uniquely determined by the condition
\begin{equation} \label{Nakayama}
    \tr(ab) = (-1)^{\bar{a} \bar{b}} \tr(b \psi(a)),\quad a,b \in A.
\end{equation}
The Frobenius superalgebra $A$ is symmetric if $\psi = \id$.

Define the teleporters
\begin{equation}
    \begin{tikzpicture}[centerzero]
        \draw (-0.2,-0.3) -- (-0.2,0.3);
        \draw (0.2,-0.3) -- (0.2,0.3);
        \teleport{-0.2,0.1}{0.2,-0.1};
    \end{tikzpicture}
    = \sum_{b \in \B_A} (-1)^{\trp \bar{b}}
    \begin{tikzpicture}[centerzero]
        \draw (-0.2,-0.3) -- (-0.2,0.3);
        \draw (0.2,-0.3) -- (0.2,0.3);
        \token{-0.2,0.1}{east}{b};
        \token{0.2,-0.1}{west}{b^\vee};
    \end{tikzpicture}
    \qquad \text{and} \qquad
    \begin{tikzpicture}[centerzero]
        \draw (-0.2,-0.3) -- (-0.2,0.3);
        \draw (0.2,-0.3) -- (0.2,0.3);
        \teleport{-0.2,-0.1}{0.2,0.1};
    \end{tikzpicture}
    = \sum_{b \in \B_A} (-1)^{\trp \bar{b}}
    \begin{tikzpicture}[centerzero]
        \draw (-0.2,-0.3) -- (-0.2,0.3);
        \draw (0.2,-0.3) -- (0.2,0.3);
        \token{-0.2,-0.1}{east}{b^\vee};
        \token{0.2,0.1}{west}{b};
    \end{tikzpicture}
    \ .
\end{equation}
(When $\psi = \id$, these are related by a sign; see~\cref{brexit}.)  Then we have
\begin{align}
    \begin{tikzpicture}[anchorbase]
        \draw[->] (0,-0.5) --(0,0.5);
        \draw[->] (0.5,-0.5) -- (0.5,0.5);
        \token{0,0.3}{east}{a};
        \teleport{0,0.1}{0.5,-0.1};
    \end{tikzpicture}
    &= (-1)^{\trp \bar{a}}\
    \begin{tikzpicture}[anchorbase]
        \draw[->] (0,-0.5) --(0,0.5);
        \draw[->] (0.5,-0.5) -- (0.5,0.5);
        \token{0.5,-0.3}{west}{a};
        \teleport{0,0.1}{0.5,-0.1};
    \end{tikzpicture}
    \ ,&
    \begin{tikzpicture}[anchorbase]
        \draw[->] (0,-0.5) --(0,0.5);
        \draw[->] (0.5,-0.5) -- (0.5,0.5);
        \token{0,-0.3}{east}{\psi(a)};
        \teleport{0,0.1}{0.5,-0.1};
    \end{tikzpicture}
    &= (-1)^{\trp \bar{a}}\
    \begin{tikzpicture}[anchorbase]
        \draw[->] (0,-0.5) --(0,0.5);
        \draw[->] (0.5,-0.5) -- (0.5,0.5);
        \token{0.5,0.3}{west}{a};
        \teleport{0,0.1}{0.5,-0.1};
    \end{tikzpicture}
    \ ,
    \\
    \begin{tikzpicture}[anchorbase]
        \draw[->] (0,-0.5) --(0,0.5);
        \draw[->] (0.5,-0.5) -- (0.5,0.5);
        \token{0,-0.3}{east}{a};
        \teleport{0,-0.1}{0.5,0.1};
    \end{tikzpicture}
    &= (-1)^{\trp \bar{a}}\
    \begin{tikzpicture}[anchorbase]
        \draw[->] (0,-0.5) --(0,0.5);
        \draw[->] (0.5,-0.5) -- (0.5,0.5);
        \token{0.5,0.3}{west}{a};
        \teleport{0,-0.1}{0.5,0.1};
    \end{tikzpicture}
    \ ,&
    \begin{tikzpicture}[anchorbase]
        \draw[->] (0,-0.5) --(0,0.5);
        \draw[->] (0.5,-0.5) -- (0.5,0.5);
        \token{0,0.3}{east}{a};
        \teleport{0,-0.1}{0.5,0.1};
    \end{tikzpicture}
    &= (-1)^{\trp \bar{a}}\
    \begin{tikzpicture}[anchorbase]
        \draw[->] (0,-0.5) --(0,0.5);
        \draw[->] (0.5,-0.5) -- (0.5,0.5);
        \token{0.5,-0.3}{west}{\psi(a)};
        \teleport{0,-0.1}{0.5,0.1};
    \end{tikzpicture}
    \ .
\end{align}
We should then modify the relations \cref{dotslide} in the definition of $\nilHecke(A)$ to be
\begin{equation}
        \begin{tikzpicture}[centerzero]
            \draw (0.3,-0.3) -- (-0.3,0.3);
            \draw (-0.3,-0.3) -- (0.3,0.3);
            \opendot{-0.15,-0.15};
        \end{tikzpicture}
        \ -\
        \begin{tikzpicture}[centerzero]
            \draw (0.3,-0.3) -- (-0.3,0.3);
            \draw (-0.3,-0.3) -- (0.3,0.3);
            \opendot{0.15,0.15};
        \end{tikzpicture}
        \ =\
        \begin{tikzpicture}[centerzero]
            \draw (-0.2,-0.3) -- (-0.2,0.3);
            \draw (0.2,-0.3) -- (0.2,0.3);
            \teleport{-0.2,0.1}{0.2,-0.1};
        \end{tikzpicture}
        \ ,\quad
        \begin{tikzpicture}[centerzero]
            \draw (-0.3,-0.3) -- (0.3,0.3);
            \draw (0.3,-0.3) -- (-0.3,0.3);
            \opendot{-0.15,0.15};
        \end{tikzpicture}
        \ -\
        \begin{tikzpicture}[centerzero]
            \draw (-0.3,-0.3) -- (0.3,0.3);
            \draw (0.3,-0.3) -- (-0.3,0.3);
            \opendot{0.15,-0.15};
        \end{tikzpicture}
        \ =\
        \begin{tikzpicture}[centerzero]
            \draw (-0.2,-0.3) -- (-0.2,0.3);
            \draw (0.2,-0.3) -- (0.2,0.3);
            \teleport{-0.2,-0.1}{0.2,0.1};
        \end{tikzpicture}
        \ ,\quad
        \begin{tikzpicture}[centerzero]
            \draw (0,-0.3) -- (0,0.3);
            \opendot{0,-0.13};
            \token{0,0.13}{east}{a};
        \end{tikzpicture}
        \ = (-1)^{\trp \bar{a}}\
        \begin{tikzpicture}[centerzero]
            \draw (0,-0.3) -- (0,0.3);
            \opendot{0,0.13};
            \token{0,-0.13}{west}{\psi(a)};
        \end{tikzpicture}
        \ ,\ a \in A.
\end{equation}
One then needs to carefully carry the Nakayama automorphism through all the constructions in the paper.

\section*{Acknowledgements}

This research of A.~Savage was supported by Discovery Grant RGPIN-2017-03854 from the Natural Sciences and Engineering Research Council of Canada (NSERC).  J.~Stuart was also supported by this Discovery Grant and an NSERC Undergraduate Student Research Award.


\bibliographystyle{alphaurl}
\bibliography{FNH}

\end{document}